\newcommand{\Depth}{2}
\newcommand{\Height}{2}
\newcommand{\Width}{2}
\newcommand\FillCube[1][1]{
\coordinate (O) at (0,0,0);
\coordinate (A) at (0,\Width,0);
\coordinate (B) at (0,\Width,\Height);
\coordinate (C) at (0,0,\Height);
\coordinate (D) at (\Depth,0,0);
\coordinate (E) at (\Depth,\Width,0);
\coordinate (F) at (\Depth,\Width,\Height);
\coordinate (G) at (\Depth,0,\Height);
\ifx#10\relax
\else
\draw[fill=blue!40] (O) -- (C) -- (G) -- (D) -- cycle;
\fi
\draw[fill=gray!20,opacity=0.5] (O) -- (C) -- (G) -- (D) -- cycle;
\draw[fill=gray!20] (O) -- (A) -- (E) -- (D) -- cycle;
\draw[fill=blue!40] (O) -- ( $ (O)!#1!(A) $ ) -- ( $ (D)!#1!(E) $ ) -- (D) -- cycle;
\draw[fill=gray!20] (O) -- (A) -- (B) -- (C) -- cycle;
\draw[fill=blue!40] (O) -- ( $ (O)!#1!(A) $ ) -- ( $ (C)!#1!(B) $ ) -- (C) -- cycle;
\draw[fill=gray!20,opacity=0.4] (D) -- (E) -- (F) -- (G) -- cycle;
\draw[fill=blue!40,opacity=0.4] (D) -- ( $ (D)!#1!(E) $ ) -- ( $ (G)!#1!(F) $ ) -- (G) -- cycle;
\draw[fill=gray!20,opacity=0.6] (C) -- (B) -- (F) -- (G) -- cycle;
\draw[fill=blue!40,opacity=0.6] (C) -- ( $ (C)!#1!(B) $ ) -- ( $ (G)!#1!(F) $ ) -- (G) -- cycle;
\if#10\relax
\else
\draw[fill=blue!40,opacity=0.6] ( $ (O)!#1!(A) $ ) -- ( $ (C)!#1!(B) $ ) -- ( $ (G)!#1!(F) $ ) -- ( $ (D)!#1!(E) $ ) -- cycle;
\fi
}
\def\R{{ \mathbb{R}}}
\newtheorem{theorem}{Theorem}
\newtheorem{corollary}[theorem]{Corollary}
\newtheorem{definition}[theorem]{Definition}
\newtheorem{lemma}[theorem]{Lemma}
\newtheorem{proposition}[theorem]{Proposition}
\newtheorem{remark}[theorem]{Remark}
\newtheorem{hypothesis}[theorem]{Hypothesis}
\def\cal{\mathcal}
\renewcommand{\geq}{\geqslant}
\def\leq{\leqslant}
\def\cal{\mathcal}
\def\1{{ \mathbbm{1}}}
\def\di{\displaystyle}
\begin{document}
\title{Spatially Controlled Evolution of Composite Materials via Stochastic Partial Differential Equations}
\author{Nacira Agram$^{1},$ Isabelle Turpin$^{2}$,  Eya Zougar$^{2}$ }
\date{\today}
\maketitle

\footnotetext[1]{Department of Mathematics, KTH Royal Institute of Technology 100 44, Stockholm, Sweden. 
Email: nacira@kth.se. Work supported by the Swedish Research Council grant (2020-04697), the
Slovenian Research and Innovation Agency, research core funding No.P1-0448. }

\footnotetext[2]{Univ. Polytechnique Hauts-de-France, INSA Hauts-de-France, CERAMATHS - Laboratoire de Mat\'eriaux C\'eramiques et de Math\'ematiques, F-59313 Valenciennes, France. 
Emails: Isabelle.Turpin@uphf.fr, Eya.Zougar@uphf.fr}

\begin{abstract}
This paper investigates a class of controlled stochastic partial differential equations (SPDEs) arising in the modeling of composite materials with spatially varying properties. The state equation describes the evolution of a material property, influenced by control inputs that adjust the diffusivity in different spatial regions. We establish the existence of mild solutions to the SPDE under appropriate regularity conditions on the coefficients and the control. A
derivation of the sufficient and necessary conditions for optimality is provided using the stochastic maximum principle. These conditions connect the state dynamics to adjoint processes, enabling the characterization of the optimal control in terms of the curvature of the state and the sensitivity of the cost. Two explicit solvable examples are presented to illustrate the theoretical results, where the optimal control is computed explicitly for a composite material with piecewise constant diffusivity.
\end{abstract}

\textbf{Keywords:} parabolic stochastic partial differential equations; mild solution; optimal stochastic control; stochastic maximum principle; composite materials.

\section{Introduction}

The study of SPDEs, particularly those of parabolic type, has been a focal point of mathematical analysis due to their wide applicability in physics, finance and engineering. Significant progress has been made in understanding the existence and uniqueness of solutions for such equations. Notably, the concept of mild solutions, rooted in semigroup theory, has proven essential for analyzing these systems. For instance, Da Prato and Zabczyk \cite{DaPZ} laid the groundwork for mild solutions in infinite-dimensional spaces, while more recent contributions, such as those by Zili and Zougar \cite{ZZ1,ZZ5}, have refined the study of stochastic heat equation with specific operator structures, including piecewise constant coefficients.

Control problems for SPDEs have also been extensively studied. The stochastic maximum principle, as developed by Bensoussan \cite{Ben} and \O ksendal and Sulem \cite{OkSu}, provides necessary optimality conditions, while the dynamic programming principle offers a complementary approach through the derivation of the associated Hamilton-Jacobi-Bellman equations, as explored in Fleming and Soner \cite{FlSo}. Among significant contributions in this area, Debussche, Fuhrman and Tessitore \cite{opt} investigated SPDEs in the framework of Hilbert spaces, developing fixed-point techniques and variational methods to establish existence and uniqueness results for optimal solutions. Their approach focuses particularly on semi-linear SPDEs, where the control appears in the drift term or boundary conditions. Similarly, \O ksendal and Draouil \cite{OkDr} explored stochastic control problems using stochastic variational calculus and the stochastic maximum principle. Their work includes applications to systems governed by SPDEs, with a particular emphasis on backward propagation methods and backward stochastic differential equations (BSDEs). Their theoretical framework addresses problems involving controls acting either continuously or impulsively.

These contributions have opened new perspectives in the analysis of stochastic systems governed by SPDEs and have inspired numerous subsequent studies on applications in quantitative finance, risk management, and the control of smart materials. In this work, we consider a SPDE framework to model the behavior of composite materials over time. Composite materials, widely used in engineering and industrial applications, consist of multiple phases (e.g., fibers and matrix) with distinct mechanical or thermal properties. These materials are often subject to spatially heterogeneous conditions, where properties such as elasticity, conductivity, strength, or density vary across different regions. The temporal evolution of these properties, such as temperature,  is of critical importance in applications ranging from structural engineering to electronics manufacturing.

To capture these dynamics, we employ a controlled SPDE model of the form:
\begin{equation*}
\left\{
\begin{array}{rcl}
 dY(t,x)&=& \mathcal{A}_uY(t,x) \, dt + b(t,x,Y(t,x), u(t)) \, dt\\ 
 &+& \sigma(t,x,Y(t,x), u(t)) \, dB(t); \quad (t,x) \in [0,T] \times \mathbb{R}, \\
Y(0,x) &=& \xi(x), \quad \forall x \in \mathbb{R},
\end{array}
\right.
\end{equation*}
where \( Y(t, x) \) represents the state of the material at time \( t \) and spatial position \( x \), such as temperature, and \( B(t) \) is a zero-mean stochastic process with covariance:
\[
\mathbb{E}[B_t B_s] = t \land s.
\]

The operator \( \mathcal{A}_u\), which governs the material's response, is defined as:
\begin{equation*}
\mathcal{A}_u= \frac{1}{2 \rho(x)} \frac{d}{dx} \left( a(x, u) \rho(x) \frac{d}{dx} \right),
\end{equation*}
where \( a(x, u) \) and \( \rho(x) \) are piecewise functions representing the properties of the composite material. Specifically:
\[
a(x, u) = a_1(u)  \mathbbm{1}_{\{ x \leq 0 \}} + a_2(u)  \mathbbm{1}_{\{ x > 0 \}}, \quad 
\rho(x) = \rho_1  \mathbbm{1}_{\{ x \leq 0 \}} + \rho_2  \mathbbm{1}_{\{ x > 0 \}}.
\]
Here, \( a_i \in C^1(\mathbb{R}) \) for \( i = 1, 2 \), and \( \rho_1, \rho_2 \) are positive constants. The spatial variation in \( a(x, u) \) allows us to model the heterogeneous nature of composite materials, while the control input \( u(t) \), independent of \( x \), represents uniform external factors such as temperature settings or mechanical forces applied across the material.

The control function \( u(t) \) could correspond to external factors such as:
\begin{itemize}
    \item Uniform temperature settings during curing or heating processes.
    \item Global mechanical loads  applied to the material.
    \item Homogeneous processing parameters such as pressure or chemical treatments.
\end{itemize}

The objective is to optimize the performance of the materials by minimizing a cost functional that reflects deviations from desired metrics (e.g.,  temperature distribution) and the cost of applying control \( u(t) \). By appropriately adjusting \( u(t) \), this approach enables globally consistent modifications to the material's behavior, ensuring the final product meets specific performance requirements effectively.

\begin{figure}[ht!]
\begin{center}
\begin{tikzpicture}

\draw[ thick] (0,2.5) node[below] {\bf Material 2} -- ++(0,1.5);
\draw[ thick] (0,1.2) node[below] {\color{red}$\bullet$} -- ++(0,1.5);
\draw[ultra thick] (0,0.2) node[below] { \bf Material 1} -- ++(0,1.5);
\draw[ultra thick, ->] (0,-2) ++ (-50:.75) arc (-50:300:.75 and .25);
\draw[ultra thick] (0,-2.8) node[below] {Spatial axis} -- ++(0,1.5);
\draw[ultra thick,dashed] (0,-1.5) -- ++(0,4.5);
\draw (0,-1) circle (1.5 and .5);
\fill[blue,shade,semitransparent] (-1.5,-1) arc (-180:0:1.5 and .5) -- ++(0,2) arc (0:180:1.5 and .5) -- cyclenode[midway,left]{\small $a(x,u) =  a_1(u)$};
\draw (-1.5,-1) arc (-180:0:1.5 and .5) -- ++(0,2) arc (0:180:1.5 and .5) -- cycle;
\draw (-1.5,1) arc (-180:0:1.5 and .5);
\draw (1.5,2) node[right] {\color{blue} Region 2: \color{blue} $x> 0$};
\draw[blue,ultra thick,<-] (1.5,1)-- (2,1) node[right] {\color{blue} $x= 0$};

\draw[red,ultra thick,<-] (-1.5,0.2)-- (-2,0.9) node[left] {\color{red} Control $u(t)$};

\draw[red,ultra thick,<-] (-1.5,1.8)-- (-2,0.9);

\draw[gray,ultra thick,<-] (1.5,-1)-- (2,-1.6) node[right] {Desired metric  };
\draw[gray] (2,-2.)node[right] { (e.g,  Heat, etc ) };

\draw[red,shade,semitransparent] (-2,1.7) node[left] { $\rho(x)=\rho_2$};

\draw[blue,shade,semitransparent] (-2,-0.5) node[left] { $\rho(x)=\rho_1$};


\draw (0,1.1) circle (1.5 and .5);

\fill[red,shade,semitransparent] (-1.5,1.1) arc (-180:0:1.5 and .5) -- ++(0,2) arc (0:180:1.5 and .5) -- cycle node[midway,left]{\small $a(x,u) =  a_2(u)$};

\draw (-1.5,1.1) arc (-180:0:1.5 and .5) -- ++(0,2) arc (0:180:1.5 and .5) -- cycle;

\draw (-1.5,3.1) arc (-180:0:1.5 and .5);

\draw (1.5,0) node[right] {\color{blue} Region 1: \color{blue} $x\leq  0$};

\draw[ultra thick,->] (0,3) -- ++(0,1.5) node[align=center] {$x:$  Spatial Domain\\ };
\end{tikzpicture}

\end{center}

\caption{An example  of the composite material.}
\end{figure}
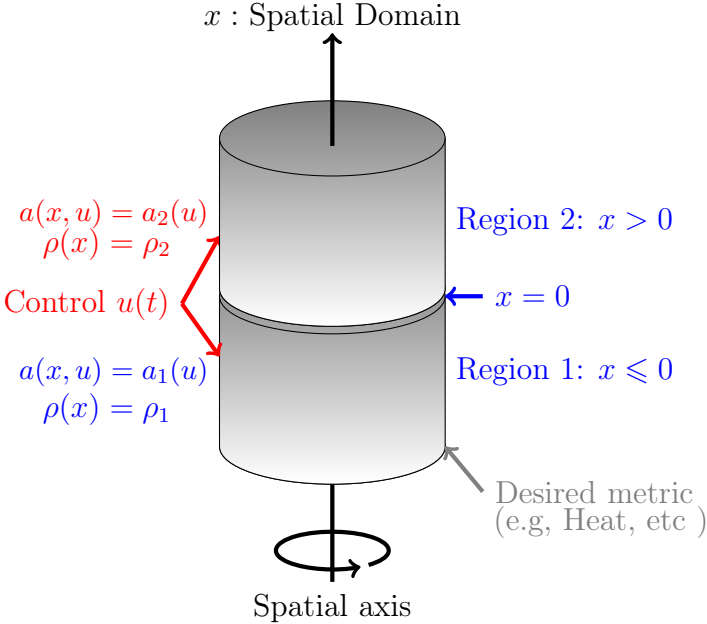

The paper is organized as follows: In Section \ref{sec2}, we focus on the  SPDE, discussing key properties of the operator and proving the existence of a mild solution. Section \ref{sec3} is devoted to deriving sufficient and necessary conditions of optimality for this class of control of SPDEs. Finally, in Section \ref{sec4}, we provide two solved examples to illustrate the theoretical results.

\section{SPDE representation }\label{sec2}
Let $T>0$, $(\Omega,\cal{F},{P})$ be a probability space with filtration $\mathbb{F}$ = $(\cal F_t)_{t \in [0,T]}$ satisfying the usual conditions and let $B$ be a one-dimensional $\mathbb{F}$-Brownian motion.

Let us consider the following general SPDE:
\begin{equation}\label{equation eq:1}
 \left\{  \begin{array}{rcl}
\displaystyle  dY(t,x) &=&{\mathcal A}Y(t,x)dt+ b(t,x,Y(t,x)) dt \\&+& \sigma(t,x,Y(t,x))dB(t), \quad (t,x) \in [0,T] \times \mathbb{R},   \\
Y(0,x)&=& \xi(x),\quad \forall x\in \mathbb{R}
\end{array} \right.
\end{equation}
where 
\begin{equation} \label{e:1.1}
{\mathcal A} = \frac{1}{2\rho (x)} \frac{d}{dx} \left( a (x)\rho(x) \frac{d}{dx} \right) ,
\end{equation}
with
\begin{equation}
\label{eq:coefA}
a(x)=  a_1 {\mathbbm 1}_{\{ x \le 0 \} } + a_2 {\mathbbm{1} }_{\{ 0 < x   \} }  \quad \hbox{and} \quad
\rho(x)= \rho_1  {\mathbbm 1}_{\{ x \le 0 \} } + \rho_2  {\mathbbm{1}}_{\{ 0 < x   \} },
\end{equation}
$a_i,\rho_i$ ($i=1, 2$) are  strictly positive constants. We note that $\di\frac{d}{d x}$ denotes the derivative in the distributional sense.
Throughout this paper, we assume the following assumptions on the coefficients $b,\sigma$ and  $\xi$.
\begin{hypothesis}\label{hypo1}
 The  coefficients $b,\sigma:\Omega\times[0,T]\times \mathbb{R} \times \R\to \mathbb{R} $ are Lipschitz continuous. That is,   there exists a constant $Lip>0,$ such that
    \begin{equation}\label{lip1}
       |b(t,x,y)-b(t,x,y')| \vee |\sigma(t,x,y)-\sigma(t,x,y')| \leq Lip\,|y-y'|, \quad \forall y,y' \in \R,
    \end{equation}
    for all $(t,x) \in [0,T] \times \mathbb{R}.$
    
    Moreover, we assume that the coefficients $b$ and $\sigma$ satisfy the the linear growth assumption. That is, there exists a constant $Lip>0$ such that
     \begin{equation}\label{lip2}
       |b(t,x,y)| \vee |\sigma(t,x,y)| \leq Lip\left( 1+|y|\right), \forall  y \in \R,
    \end{equation}
    for all $(t,x) \in [0,T] \times \mathbb{R}.$
\end{hypothesis}

\begin{hypothesis}\label{hypo2}
 The function $\xi:\R\mapsto\R$ is a non-random, measurable, and bounded function. That is, there exists a positive constant $M,$ such that $\left|\xi(x)\right|\leq M,\, \forall x\in\R.$
\end{hypothesis}

The SPDEs described in (\ref{equation eq:1}), specifically the case where $b\equiv 0$, were investigated by Zili and Zougar in \cite{ZZ1, ZZ2, ZZ3, ZZ5}, with $\sigma \equiv 1,$ as well as the case where $\sigma$ is an affine function, in \cite{ZZ4}, under different types of noise. They were considered it as a  good models for diffusion phenomena in medium consisting of two kinds of materials and
undergoing stochastic perturbations.
Another interesting reason is that the equation can be viewed as a stochastic counterpart of the deterministic heat problem, which has been studied by various authors in many areas, such
as ecology \cite{BO}, biology \cite{serge}, etc. The non-smoothness of the coefficients reflects
the heterogeneity of the medium in which the process under study propagates. One of the most important aspects is the explicit expression of its fundamental solution $G$. Further details about the  deterministic solution can
be found in Section \ref{subsec2}.

\subsection{Some properties  of the operator ${\cal A}$} 
\begin{lemma} 
 Let $Y\in L^2([0,T]\times \R ).$ Then, for any   $\displaystyle \varphi \in C^{\infty}_c([0,T]\times \R ),$ we have
\begin{equation}
\di \langle{\mathcal A}Y,\varphi\rangle
 =\di   \langle Y,{\mathcal A}^*\varphi\rangle,
\end{equation}
with respect to the measure $\rho(x)\,dx.$ Here, the adjoint operator ${\mathcal A}^*$ is given by: $${\mathcal A}^*( \varphi(s,x))= \di  \frac{1}{2\rho(x)} \left(a_2\rho_2-a_1\rho_1 \right)\frac{d \varphi(s,x)}{dx} \delta_0(x)+ {\cal A} ( \varphi(s,x)),$$ 
where, $\delta_0$ refers to the Dirac measure and $\langle{.,.\rangle}$ is the scalar production of $L^2([0,T]\times \R).$
\end{lemma}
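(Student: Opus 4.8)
The plan is to compute $\langle \mathcal{A}Y,\varphi\rangle$ directly by integration by parts with respect to the weight $\rho(x)\,dx$, being careful about the jump of $a\rho$ at $x=0$. Write
\[
\langle \mathcal{A}Y,\varphi\rangle = \int_0^T\!\!\int_{\mathbb{R}} \Big(\frac{1}{2\rho(x)}\frac{d}{dx}\big(a(x)\rho(x)Y'(t,x)\big)\Big)\varphi(t,x)\,\rho(x)\,dx\,dt
= \int_0^T\!\!\int_{\mathbb{R}} \frac{1}{2}\frac{d}{dx}\big(a(x)\rho(x)Y'(t,x)\big)\varphi(t,x)\,dx\,dt,
\]
since the factor $\rho(x)$ in the measure cancels the $1/\rho(x)$ in $\mathcal{A}$. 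Here the distributional derivative $\frac{d}{dx}$ acting on the function $x\mapsto a(x)\rho(x)Y'(t,x)$, which is smooth on $(-\infty,0)$ and $(0,\infty)$ but jumps at $0$, produces a bulk term plus a $\delta_0$ contribution proportional to the jump $\big(a_2\rho_2-a_1\rho_1\big)$. The first obstacle, and really the only subtle point, is to make this rigorous: one should either work on each half-line separately and track boundary terms at $0$, or interpret everything as a pairing of distributions against the test function; I would present it via the splitting $\int_{\mathbb{R}} = \int_{-\infty}^0 + \int_0^\infty$.

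First, I would restrict to smooth $Y$ (say $Y(t,\cdot)\in C^\infty_c$ for a.e.\ $t$) and integrate by parts twice on each of $(-\infty,0)$ and $(0,\infty)$. Moving one derivative from $a\rho Y'$ onto $\varphi$ gives, on each half-line, a boundary term at $x=0$ of the form $\mp\frac{1}{2}a_i\rho_i Y'(t,0)\varphi(t,0)$ (signs from the orientation of the endpoints), plus the bulk integral $-\frac{1}{2}\int a(x)\rho(x)Y'(t,x)\varphi'(t,x)\,dx$; the boundary terms at $\pm\infty$ vanish by compact support. The two boundary contributions at $0$ combine to $-\frac{1}{2}\big(a_2\rho_2-a_1\rho_1\big)Y'(t,0)\varphi(t,0)$. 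I then integrate by parts once more in the bulk term, moving the derivative from $\varphi'$ back onto $a\rho Y'$ — equivalently, onto $Y$ after another transfer — to land on $\mathcal{A}^*\varphi$; doing the symmetric manipulation on $\varphi$, the bulk term becomes $\int_0^T\!\int_{\mathbb{R}} Y(t,x)\,\mathcal{A}\varphi(t,x)\,\rho(x)\,dx\,dt$ (note $\mathcal{A}$ is formally self-adjoint in $L^2(\rho\,dx)$ away from the interface), and the leftover boundary term at $0$ pairs against $Y(t,0)$, producing exactly the $\delta_0$-term
\[
\int_0^T\!\!\int_{\mathbb{R}} Y(t,x)\,\frac{1}{2\rho(x)}\big(a_2\rho_2-a_1\rho_1\big)\frac{d\varphi(t,x)}{dx}\,\delta_0(x)\,\rho(x)\,dx\,dt
= \int_0^T \frac{1}{2}\big(a_2\rho_2-a_1\rho_1\big)\varphi'(t,0)Y(t,0)\,dt.
\]
Matching constants identifies $\mathcal{A}^*$ as stated.

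Finally, I would remove the smoothness assumption on $Y$: the identity $\langle \mathcal{A}Y,\varphi\rangle = \langle Y,\mathcal{A}^*\varphi\rangle$ is by definition the way the distributional operator $\mathcal{A}$ acts on a general $Y\in L^2([0,T]\times\mathbb{R})$ tested against $\varphi\in C^\infty_c$, so in fact the computation above \emph{defines} the left-hand side, and the only content is the explicit form of $\mathcal{A}^*\varphi$ — which is a classical object since $\varphi$ is smooth. Alternatively, approximate $Y$ in $L^2(\rho\,dx)$ by smooth compactly supported functions $Y_n$, apply the identity to each $Y_n$, and pass to the limit using that $\mathcal{A}^*\varphi$ is a fixed element of (the dual of) a suitable test space and that the trace map $Y\mapsto Y(\cdot,0)$ is not needed on the $Y$-side once everything is written against $\varphi$. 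The main thing to get right is bookkeeping of the interface terms at $x=0$ and the cancellation of the weight $\rho$; no deep estimate is involved.
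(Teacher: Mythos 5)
There is a genuine bookkeeping gap at the first integration by parts, and it changes the final formula. In the paper, the first integration by parts is performed globally on $\mathbb{R}$ and is nothing but the definition of the distributional derivative of the locally integrable function $x\mapsto a(x)\rho(x)\frac{dY}{dx}(s,x)$: it produces no interface term, i.e. $\int_{\mathbb{R}}\frac{d}{dx}\big(a\rho\frac{dY}{dx}\big)\varphi\,dx=-\int_{\mathbb{R}}a\rho\frac{dY}{dx}\frac{d\varphi}{dx}\,dx$. The splitting into $(-\infty,0)$ and $(0,\infty)$ enters only at the \emph{second} integration by parts, where the interface contribution $(a_2\rho_2-a_1\rho_1)Y(s,0)\varphi'(s,0)$ appears and becomes the $\delta_0$-term of $\mathcal{A}^*$. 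You instead split at the first step and record boundary terms combining to $-\tfrac12(a_2\rho_2-a_1\rho_1)Y'(t,0)\varphi(t,0)$; this term then silently disappears from your argument. If your starting object is the distributional pairing (as it must be, since $\frac{d}{dx}$ is distributional in the definition of $\mathcal{A}$), those half-line boundary terms are exactly cancelled by the $\delta_0$ part of $\frac{d}{dx}(a\rho Y')$ --- the very jump contribution you mention heuristically at the outset --- and you never perform this cancellation; if instead your starting object is the piecewise classical expression $\sum_i\int_{\mathcal{D}_i}(a_i\rho_i Y')'\varphi$, then carrying your computation to the end yields $\langle Y,\mathcal{A}^*\varphi\rangle-\tfrac12(a_2\rho_2-a_1\rho_1)\int_0^T Y'(t,0)\varphi(t,0)\,dt$, i.e. a different adjoint, and the extra term is nonzero in general (take smooth $Y$ with $Y'(0)\neq0$ and $\varphi(0)\neq0$). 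So ``matching constants'' does not identify $\mathcal{A}^*$ as stated; you must either start from the distributional identity directly, as the paper does, or explicitly add the Dirac part of the distributional derivative and exhibit the cancellation.

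A second, lesser point: your reduction to smooth $Y$ followed by $L^2$-approximation is not as routine as claimed, because the right-hand side $\langle Y,\mathcal{A}^*\varphi\rangle$ contains the trace $Y(\cdot,0)$ through the $\delta_0$-term, and this trace is neither defined for a general $Y\in L^2$ nor stable under $L^2$ convergence, so one cannot simply ``pass to the limit'' in that term. The paper's own proof is admittedly formal on this point (it manipulates $\frac{dY}{dx}$ and $Y(s,0)$ for $Y\in L^2$), but it does not rest on a density argument, whereas your version asserts a limiting step that does not go through as stated.
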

\proof
Let $\displaystyle \varphi \in C^{\infty}_c([0,T]\times \R )$
and $Y\in L^2([0,T]\times \R),$ then  we can express the term $\langle{\mathcal A}Y,\varphi \rangle$ as well:
\begin{align*}
\langle{\mathcal A}Y,\varphi\rangle=\int_{[0,T]\times \R}{\mathcal A} Y(s, x)\,\varphi(s,x) \rho (x)\,dx\,ds 
&=  \frac{1}{2} \int_{[0,T]} I(s) ds, 
\end{align*} 
with $I$ denotes the integral 
$I(s)=\di \int_{\R}\frac{d}{dx}\left(a(x)\rho(x)\frac{dY(s,x)}{dx} \right)\varphi(s,x) \,dx,$ $\forall s\in [0,T].$

An integration by parts allows us to obtain 
\begin{equation}\label{weakI}
\begin{array}{rcl}
\di I(s) &=& \di   \left[a(x) \rho (x) \frac{d Y(s,x)}{d x} \,\varphi(s,x)\right]_{\R} -\int_{ \R}  a(x) \rho (x)  \frac{d Y(s,x)}{d x} \,\frac{d \varphi(s,x)}{d x}\,dx \\
\noalign{\vskip 2mm}
&=&-\di \int_{ \R} a(x) \rho (x)  \frac{d Y(s,x)}{d x} \,\frac{d \varphi(s,x)}{d x}\,dx,
\end{array}
\end{equation}
here, in the last equality, we are used the definition of the derivative in the sense of distributions for a locally integrable function $x\mapsto a(x) \rho (x)  \frac{d Y(s,x)}{d x} .$

Therefore, we can write
\begin{align*}
\di I(s) &=-\di \int_{ \R^*} a(x) \rho (x)  \frac{d Y(s,x)}{d x} \,\frac{d \varphi(s,x)}{d x}\,dx
\\ &= -\di \sum_{i=1}^2 \int_{ {\cal D}_i} a_i \rho_i  \frac{d Y(s,x)}{d x} \,\frac{d \varphi(s,x)}{d x}\,dx,
\end{align*}
with ${\cal D}_1=(-\infty,0)$ and ${\cal D}_2=(0,+\infty).$ Again, by an integration by parts, we get

$$\int_{{\cal D}_i} \frac{d Y(s,x)}{d x} \, \frac{d \varphi(s,x)}{d x}  \,dx = (-1)^{i+1}  Y(s,0)\frac{d \varphi(s,0)}{d x} - \int_{{\cal D}_i}  Y(s,x) \, \frac{d^2 \varphi(s,x)}{d ^2x}  \,dx,$$
for any $i=1,2.$ Thus, 
\[ I(s)= \di \left[a_2\rho_2-a_1\rho_1 \right]Y(s,0)\frac{d \varphi(s,0)}{d x} +\sum_{i=1}^2 \int_{ {\cal D}_i} a_i \rho_i  Y(s,x) \frac{d^2 \varphi(s,x)}{d ^2x}  \,dx. \]
Therefore, it implies
\[ \begin{array}{rcl} &&\di \langle{\mathcal A}Y,\varphi\rangle
\\&=& \di  \frac{1}{2} \left[a_2\rho_2-a_1\rho_1 \right] \int_{[0,T]}Y(s,0)\frac{d \varphi(s,0)}{dx}ds +\int_{[0,T]\times\R^*}  Y(s,x) {\cal A} ( \varphi(s,x)) \rho(x) \,dx\,ds 
\\&=&  \di  \frac{1}{2} \left[a(0^+)\rho(0^+)-a(0^-)\rho(0^-) \right] \int_{[0,T]}Y(s,0)\frac{d \varphi(s,0)}{dx}ds \\ && \di  +\int_{[0,T]\times\R}  Y(s,x) {\cal A} ( \varphi(s,x)) \rho(x) \,dx\,ds 
\\&=&  \di  \frac{1}{2} \left[a(0^+)\rho(0^+)-a(0^-)\rho(0^-) \right] \int_{[0,T]}Y(s,0)\frac{d \varphi(s,0)}{dx} ds \\ && \di  +\int_{[0,T]\times\R}  Y(s,x) {\cal A} ( \varphi(s,x)) \rho(x) \,dx\,ds 
\\&=& \di \langle Y, {\mathcal A}^*\varphi\rangle,
 \end{array}\]
 with ${\mathcal A}^*( \varphi(s,x))= \di  \frac{1}{2\rho(x)} \left(a_2\rho_2-a_1\rho_1 \right)\frac{d \varphi(s,x)}{dx} \delta_0(x)+ {\cal A} ( \varphi(s,x)).$
 
Then, the proof is established.
\endproof
\begin{remark}
    If we assume that $Y$ is the solution to Equation (\ref{equation eq:1}) such that $ Y(t,0)=0,$ for any $t\in[0,T]$, then,  the operator $\cal{A}_{(\rho,a,b)}$ is a self-adjoint operator in $L^2(\R),$ with respect to the measure $\rho(x)\,dx.$
    \end{remark}
 
\subsection{Existence and uniqueness of the mild solution to Equation (\ref{equation eq:1})}\label{subsec2}

\begin{definition}
We call a mild solution to SPDE  defined in $(\ref{equation eq:1})$ the stochastic process:
\begin{equation}\label{mild}
\begin{array}{rcl}
  Y(t,x)&=& {   \di  \int_\R G(t,x,z)\,\xi(z)\,dz +\int_0^t \int_{{\mathbb R}} G(t-s,x,z)\,b(s,z,Y(s,z))\,dz\,ds} \\&+&  \di \int_0^t \int_{{\mathbb R}} G(t-s,x,z)\,\sigma(s,z,Y(s,z))\,dz\,dB_s.
\end{array}
\end{equation} 
Here the stochastic integral with respect to $dB_s$ is the classical It\^o one with respect to one Brownian motion, and the integral with respect to $dz$ is  the Lebesgue-Stieltjess one. We note here that the function $G$ is the fundamental
solution of the deterministic problem $\partial_t Y=\cal{A}Y.$ 
\end{definition}

\begin{proposition}Let $\varphi\in L^2([0,T]\times \R^2).$
A  stochastic integral with respect to $dB_s$ in the form $$\di X_t(x)=\int_0^t \left(\int_{\R} \varphi(t-s,x,z)\,dz\right) dB_s$$  exists if, and only if, the integrand $$\displaystyle F^x_{t,.}= \int_\R \varphi(t-.,x,z)\,dz$$ is square-integrable with respect to Lebesgue measure, and in that case, we have
\begin{equation}
\label{Cov}
{\mathbb E}\left[X_t(x)X_s(x)\right]= \di \int_0^{t\wedge s}  \left( \int_{\R} \varphi(t-u ,x,z)\,dz
 \int_\R \varphi(s-u,x,z)\,dz \right) du
 \end{equation}
\begin{equation}
 \label{iso}
 \text{and}\quad {\mathbb E}\left[X_t^2(x)\right]=   \int_0^{t}  \left( \int_\R \varphi(t-u,x,z)\,dz\right)^2 du,
 \end{equation}
 for every $t,s \in [0,T]$ and $x\in \R.$
 \end{proposition}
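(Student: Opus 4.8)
The plan is to recognise that, for each fixed $t\in[0,T]$ and $x\in\R$, the integrand $u\mapsto F^x_{t,u}=\int_\R\varphi(t-u,x,z)\,dz$ is a \emph{deterministic} function on $[0,t]$, so that $X_t(x)=\int_0^tF^x_{t,u}\,dB_u$ is a Wiener integral against the one--dimensional Brownian motion $B$, and the whole statement is the classical existence criterion and isometry for such integrals. First I would fix $t$ and $x$ and settle measurability: choosing a jointly measurable representative of $\varphi\in L^2([0,T]\times\R^2)$, the map $(u,z)\mapsto\varphi(t-u,x,z)$ (extended by $0$ for $t-u<0$) is measurable, and Tonelli together with Fubini give measurability of the partial $z$--integral $F^x_{t,\cdot}$ on the set where it is finite. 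With this in hand, $F^x_{t,\cdot}$ is a well-defined (deterministic) real function on $[0,t]$.

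Next I would invoke the construction of the Wiener integral. For a step function $g=\sum_k c_k\,\1_{(u_k,u_{k+1}]}$ one sets $\int_0^t g\,dB=\sum_k c_k(B_{u_{k+1}}-B_{u_k})$, which is centred and satisfies $\mathbb{E}\big[(\int_0^t g\,dB)^2\big]=\sum_k c_k^2(u_{k+1}-u_k)=\int_0^t g(u)^2\,du$ because $B$ has independent increments with $\mathbb{E}[B_rB_\ell]=r\wedge\ell$. Thus $g\mapsto\int_0^t g\,dB$ is a linear isometry from the step functions, which are dense in $L^2([0,t],du)$, into $L^2(\Omega)$, and it extends uniquely to all of $L^2([0,t],du)$; this gives the ``if'' direction, the integral $X_t(x)$ existing as an element of $L^2(\Omega)$ whenever $\int_0^t(F^x_{t,u})^2\,du<\infty$. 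For the ``only if'' direction, if $F^x_{t,\cdot}\notin L^2([0,t],du)$, take bounded truncations $g_n=F^x_{t,\cdot}\,\1_{\{|F^x_{t,\cdot}|\le n\}}\in L^2([0,t])$; then $\mathbb{E}\big[(\int_0^tg_n\,dB)^2\big]=\int_{\{|F^x_{t,\cdot}|\le n\}}(F^x_{t,u})^2\,du\to\infty$, so these approximations are not Cauchy in $L^2(\Omega)$ and, by linearity and the defining isometry, no square-integrable random variable can serve as $X_t(x)$. Passing to the limit in the step-function isometry then yields $\mathbb{E}[X_t^2(x)]=\int_0^t(F^x_{t,u})^2\,du=\int_0^t\big(\int_\R\varphi(t-u,x,z)\,dz\big)^2du$, which is $(\ref{iso})$.

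For the covariance $(\ref{Cov})$ I would fix $x$ and assume without loss of generality $t\le s$, so $t\wedge s=t$. Since $F^x_{s,\cdot}\in L^2([0,s],du)$ (needed for $X_s(x)$ to exist), split $X_s(x)=\int_0^tF^x_{s,u}\,dB_u+\int_t^sF^x_{s,u}\,dB_u$. The second term is a Wiener integral of a deterministic function supported on $(t,s]$, hence a function of the increments $(B_r-B_t)_{t\le r\le s}$, so it is centred and independent of $\mathcal{F}_t$; as $X_t(x)=\int_0^tF^x_{t,u}\,dB_u$ is $\mathcal{F}_t$-measurable and centred, it is orthogonal to that term in $L^2(\Omega)$. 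Therefore $\mathbb{E}[X_t(x)X_s(x)]=\mathbb{E}\big[\int_0^tF^x_{t,u}\,dB_u\cdot\int_0^tF^x_{s,u}\,dB_u\big]$, and polarisation of the It\^o isometry gives $\mathbb{E}[X_t(x)X_s(x)]=\int_0^tF^x_{t,u}F^x_{s,u}\,du$, i.e. $(\ref{Cov})$; the case $s\le t$ is symmetric, which is why the formula is expressed over $[0,t\wedge s]$.

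The computations are essentially routine; I expect two modest points to require care. The first is the measurability and finiteness of the inner $z$--integral defining $F^x_{t,\cdot}$ for the relevant $(t,x)$, handled by fixing a measurable representative of $\varphi$ and invoking Fubini/Tonelli. The second, and the real bookkeeping obstacle, is that in $X_t(x)$ the parameter $t$ enters \emph{both} as the upper limit of the $dB$--integral and inside the integrand $\varphi(t-u,x,z)$; consequently in the covariance the two integrands $F^x_{t,\cdot}$ and $F^x_{s,\cdot}$ genuinely differ, and one must split over $(t\wedge s,\,t\vee s]$ and use independence of increments rather than a naive isometry to see that only the product over the common interval survives.
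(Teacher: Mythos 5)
Your argument is correct: for fixed $t$ and $x$ the integrand is deterministic, so $X_t(x)$ is a classical Wiener integral, and the existence criterion, the isometry \eqref{iso}, and the covariance \eqref{Cov} (via splitting over $(t\wedge s, t\vee s]$ and independence of increments, or equivalently polarisation after extending the shorter integrand by zero) all follow exactly as you describe. The paper in fact states this proposition without proof, treating it as the standard Wiener-integral result, so your proposal simply supplies the routine argument the authors implicitly invoke.
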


An explicit expression of the fundamental solution associated to $\partial_t Y=\cal{A} Y$ is given by the following proposition, as demonstrated in \cite{CZ}.
\begin{proposition}There exists a unique fundamental solution $ G(t-s,x, z)$ of the deterministic
partial differential Equation. It can be
explicitly expressed as well:

\begin{equation} \label{e:4}
 \begin{array}{rcl}
\di  G( t-s, x,z)
 &=& \di  \frac{{ \mathbbm{1}}_{ \{ t > s \} } }{\sqrt{2 \pi (t-s)}}\left( \frac{ {\mathbbm{1}}_{ \{ z \le 0 \} } }{\sqrt{a_1}}+ \frac{ {\mathbbm{1}}_{ \{ z > 0 \} } }{\sqrt{a_2}}\right)\di\left\{ \exp \left( - \frac{(h(x) - h(z))^2}
{2 (t-s)}\right) \right. \\&+&  \displaystyle \left.
\lambda \, {\rm sign} (z)\, \exp
\left( - \frac{(|h(x)|  + |h(z)|)^2}{2(t-s)}\right) \right\},
\end{array}
\end{equation}
where
\begin{equation}\label{beta} 
\lambda= \frac{\rho_2\sqrt{a_2}- \rho_1\sqrt{a_1} }{\rho_2\sqrt{a_2}+ \rho_1\sqrt{a_1} },\quad h(z)= \frac{z}{\sqrt{a_1}}{\mathbbm 1}_{\{z\leq 0\}}+\frac{z}{\sqrt{a_2}} {\mathbbm{1}}_{\{z>0\}},
\quad  {\rm sign}(z)=
\begin{cases}
-1\hspace*{0.3cm} if \;z\leq 0\\ \;\;\;1\hspace*{0.3cm} if \;z> 0.
\end{cases}
\end{equation}
\end{proposition}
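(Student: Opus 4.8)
The plan is to prove the proposition by a direct verification of \eqref{e:4} together with a standard uniqueness argument; write $\tau:=t-s$. A fundamental solution of $\partial_\tau Y=\mathcal A Y$ is characterised by three properties: (i) for each fixed $z$ and each $\tau>0$, $G(\tau,\cdot,z)$ solves $\partial_\tau G=\mathcal A_x G$ on each of the half-lines $\mathcal D_1=(-\infty,0)$ and $\mathcal D_2=(0,\infty)$; (ii) the interface conditions at $x=0$ that make $\mathcal A_x G$ a genuine function rather than a distribution carrying a Dirac mass at $0$ hold, namely $x\mapsto G(\tau,x,z)$ and $x\mapsto a(x)\rho(x)\partial_x G(\tau,x,z)$ are continuous across $0$; (iii) $\int_\R G(\tau,x,z)\psi(z)\,dz\to\psi(x)$ as $\tau\to0^+$ for every bounded continuous $\psi$. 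Uniqueness under (i)--(iii) is clear: $\mathcal A$ equipped with these transmission conditions is a self-adjoint, non-positive operator on $L^2(\R,\rho(x)\,dx)$ (standard Sturm--Liouville theory, the symmetry being the integration by parts of the Lemma above with the boundary terms killed), hence it determines a unique strongly continuous semigroup, so any two fundamental solutions are integral kernels of the same semigroup and agree a.e., and being continuous off the diagonal they agree everywhere; alternatively one simply invokes \cite{CZ}. It remains to check (i)--(iii) for the candidate \eqref{e:4}.

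For (i) the point is that on each $\mathcal D_i$ the scale function $h$ is linear with $h'\equiv1/\sqrt{a_i}$ and $\rho$ is constant, so $\mathcal A=\tfrac{a_i}{2}\,d^2/dx^2$ there. Writing $g(\tau,\eta)=(2\pi\tau)^{-1/2}e^{-\eta^2/(2\tau)}$ for the standard heat kernel, which satisfies $\partial_\tau g=\tfrac12\partial_\eta^2 g$, each summand of \eqref{e:4} is, up to the factor $c(z):=\mathbbm{1}_{\{z\leq0\}}/\sqrt{a_1}+\mathbbm{1}_{\{z>0\}}/\sqrt{a_2}$ which is constant in $x$, of the form $g(\tau,\eta(x))$ with $\eta(x)=h(x)-h(z)$ or $\eta(x)=|h(x)|+|h(z)|$; in all four cases one has $(\eta')^2=1/a_i$ and $\eta''=0$ on $\mathcal D_i$, so the chain rule gives $\tfrac{a_i}{2}\partial_x^2 g(\tau,\eta(x))=\tfrac12\partial_\eta^2 g=\partial_\tau g$, which is (i).

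Step (ii) is the crux, and it is where the value of $\lambda$ gets pinned down rather than assumed. Continuity of $x\mapsto G(\tau,x,z)$ at $0$ is automatic since $h(0)=0$. For the flux I would differentiate \eqref{e:4} in $x$ and let $x\to0^\pm$, using that $h'$ equals $1/\sqrt{a_2}$ on the right and $1/\sqrt{a_1}$ on the left, that $\frac{d}{dx}|h(x)|$ equals $+1/\sqrt{a_2}$ on the right and $-1/\sqrt{a_1}$ on the left, and the identity $\operatorname{sign}(z)\,|h(z)|=h(z)$; this yields
\begin{align*}
\partial_x G(\tau,0^+,z)&=\frac{c(z)\,(1-\lambda)\,h(z)}{\sqrt{2\pi\tau}\;\tau\sqrt{a_2}}\,e^{-h(z)^2/(2\tau)},\\
\partial_x G(\tau,0^-,z)&=\frac{c(z)\,(1+\lambda)\,h(z)}{\sqrt{2\pi\tau}\;\tau\sqrt{a_1}}\,e^{-h(z)^2/(2\tau)},
\end{align*}
so that the flux condition $a_2\rho_2\,\partial_x G(\tau,0^+,z)=a_1\rho_1\,\partial_x G(\tau,0^-,z)$ reduces to $\rho_2\sqrt{a_2}\,(1-\lambda)=\rho_1\sqrt{a_1}\,(1+\lambda)$, i.e. precisely $\lambda=\dfrac{\rho_2\sqrt{a_2}-\rho_1\sqrt{a_1}}{\rho_2\sqrt{a_2}+\rho_1\sqrt{a_1}}$ of \eqref{beta}. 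Hence the reflected term, together with its coefficient, is forced by the transmission conditions.

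Finally, for (iii) the key remark is that $c(z)=h'(z)$, so the change of variable $w=h(z)$ (a monotone bijection of $\R$) turns the $z$-integral of the forward term into the heat-kernel convolution $\int_\R g(\tau,h(x)-w)\,\psi(h^{-1}(w))\,dw$, which converges to $\psi(h^{-1}(h(x)))=\psi(x)$ as $\tau\to0^+$; under the same substitution the reflected term is bounded in modulus by $\mathbb P\big(|\mathcal N(0,\tau)|>|h(x)|\big)\,\sup_{x}|\psi(x)|$, which vanishes for every $x\neq0$. This gives (iii), and with (i), (ii) and the uniqueness statement it completes the proof. (Equivalently, \eqref{e:4} can be read off from the probabilistic picture: $\mathcal A$ generates a Brownian motion run in the scale $h$ with a skew at the origin, and the method of images produces the reflected term with reflection coefficient $\lambda$.) The only genuinely delicate steps are the one-sided derivative computation in (ii) — which is pleasantly self-correcting, since it outputs the correct $\lambda$ rather than requiring it as input — and the handling of the reflected term in (iii); everything else reduces to the chain rule and the substitution $w=h(z)$.
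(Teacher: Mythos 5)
Your verification is essentially correct, but it is a genuinely different route from the paper, which offers no proof at all for this proposition: it simply cites Chen and Zili \cite{CZ}, where the kernel is derived (via the analysis of the operator with discontinuous conductance and the associated skew-Brownian-motion/transmission-problem machinery). What you do instead is a self-contained direct check: (i) the chain-rule computation on each half-line, using that $h$ is affine there with $(h')^2=1/a_i$ so that $\mathcal A=\tfrac{a_i}{2}\,d^2/dx^2$, is correct for both the forward and the reflected Gaussian; (ii) your one-sided flux computation is right --- $a_2\rho_2\,\partial_xG(\tau,0^+,z)=a_1\rho_1\,\partial_xG(\tau,0^-,z)$ does reduce to $\rho_2\sqrt{a_2}(1-\lambda)=\rho_1\sqrt{a_1}(1+\lambda)$, which forces exactly the $\lambda$ of \eqref{beta}, and this is the most informative part of your argument since it explains where $\lambda$ comes from rather than taking it as given; (iii) the observation $c(z)=h'(z)$ and the substitution $w=h(z)$ correctly reduce the initial-condition check to the classical heat kernel, with the reflected term controlled by $\mathbb{P}\bigl(|\mathcal N(0,\tau)|>|h(x)|\bigr)$ (and in fact it also vanishes at $x=0$ for continuous $\psi$, by symmetry of the $\mathrm{sign}(w)$ factor). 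What the paper's citation buys is precisely the point you treat most lightly: uniqueness. Your claim that any two fundamental solutions are kernels of the same semigroup and hence coincide needs a restriction to a reasonable class (nonnegative kernels, or kernels with Gaussian-type upper bounds as in Aronson \cite{Aronson}), since the heat equation admits Tychonoff-type nonuniqueness without growth constraints; as stated, the semigroup argument assumes what it should prove, namely that every fundamental solution generates the same (bounded-solution) evolution. Since you also offer to invoke \cite{CZ} for uniqueness --- which is exactly what the paper does --- the gap is easily repaired, but if you want the argument to be fully self-contained you should either verify the Gaussian upper bound for your candidate (it is immediate from Lemma \ref{maj}) and quote a uniqueness theorem within that class, or make explicit that uniqueness is taken from \cite{CZ} while your contribution is the verification of the explicit formula.
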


The following lemmas provide certain properties of the deterministic function $G$.
\begin{lemma}
   The fundamental solution $G$ is a positive function for any given $t,s \in (0,T] $ and $x,z\in \R.$
\end{lemma}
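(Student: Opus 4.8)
The plan is to read off the sign of $G$ directly from the explicit formula \eqref{e:4}, reducing everything to a single elementary comparison of Gaussian exponentials. First I would separate the prefactor
\[
\frac{\mathbbm{1}_{\{t>s\}}}{\sqrt{2\pi(t-s)}}\left(\frac{\mathbbm{1}_{\{z\le 0\}}}{\sqrt{a_1}}+\frac{\mathbbm{1}_{\{z>0\}}}{\sqrt{a_2}}\right),
\]
which is strictly positive whenever $t>s$ (recall $a_1,a_2>0$), so that the sign of $G(t-s,x,z)$ coincides with that of the bracketed term
\[
E_1+\lambda\,{\rm sign}(z)\,E_2,\qquad E_1:=\exp\Bigl(-\tfrac{(h(x)-h(z))^2}{2(t-s)}\Bigr),\quad E_2:=\exp\Bigl(-\tfrac{(|h(x)|+|h(z)|)^2}{2(t-s)}\Bigr).
\]

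Then I would invoke two observations. First, from \eqref{beta}, $\lambda=\dfrac{\rho_2\sqrt{a_2}-\rho_1\sqrt{a_1}}{\rho_2\sqrt{a_2}+\rho_1\sqrt{a_1}}$ has the form $(b-a)/(b+a)$ with $a=\rho_1\sqrt{a_1}>0$ and $b=\rho_2\sqrt{a_2}>0$, hence $|\lambda|<1$, while of course $|{\rm sign}(z)|=1$. Second, for all $x,z\in\R$,
\[
(|h(x)|+|h(z)|)^2-(h(x)-h(z))^2=2\bigl(|h(x)h(z)|+h(x)h(z)\bigr)\ge 0,
\]
so, the exponential being decreasing in its argument, $0<E_2\le E_1$. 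Combining these,
\[
E_1+\lambda\,{\rm sign}(z)\,E_2\ge E_1-|\lambda|\,E_2\ge E_1-|\lambda|\,E_1=(1-|\lambda|)\,E_1>0,
\]
and multiplying back by the positive prefactor yields $G(t-s,x,z)>0$ for every $t>s$ and $x,z\in\R$.

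I do not anticipate a genuine obstacle here; the only mild subtlety is the sign bookkeeping in the comparison $(|h(x)|+|h(z)|)^2\ge(h(x)-h(z))^2$, which I would settle uniformly via the displayed identity rather than by splitting into cases according to the signs of $x$ and $z$ (equivalently, of $h(x)$ and $h(z)$). It is also worth recording that positivity is asserted on the strictly parabolic region $t>s$ only: when $t\le s$ the factor $\mathbbm{1}_{\{t>s\}}$ forces $G\equiv 0$, which is why the statement concerns $G(t-s,\cdot,\cdot)$ with $t>s$.
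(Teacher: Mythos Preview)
Your argument is correct and follows essentially the same route as the paper: isolate the positive prefactor, compare the two Gaussian exponentials via $(h(x)-h(z))^2\le(|h(x)|+|h(z)|)^2$, and then use $|\lambda|<1$ to conclude. The paper packages the last step through an auxiliary function $\gamma(\cdot)=\mathbbm{1}_{\{\lambda\,{\rm sign}(\cdot)\ge 0\}}+(1+\lambda\,{\rm sign}(\cdot))\mathbbm{1}_{\{\lambda\,{\rm sign}(\cdot)\le 0\}}$ (a case split on the sign of $\lambda\,{\rm sign}(z)$), whereas your single chain $E_1+\lambda\,{\rm sign}(z)\,E_2\ge E_1-|\lambda|E_2\ge(1-|\lambda|)E_1>0$ handles all cases at once and makes the strict inequality $|\lambda|<1$ explicit; this is a cosmetic streamlining rather than a different method.
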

\begin{proof}
 By using the way that the function $x \mapsto e^{-x}$ is decreasing for any $x\in \R+,$ and Cauchy Schwartz inequality,    one can easily prove that  for any $t,s \geq 0$ and $x,y\in \R,$ 
 $$ \exp \left( - \frac{(h(x) - h(z))^2}
{2 (t-s)}\right)  \geq \exp
\left( - \frac{(|h(x)|  + |h(z)|)^2}{2(t-s)}\right). $$
Therefore, it implies
\[ G( t-s, x,z) \geq   \frac{ \gamma(x) }{\sqrt{2 \pi (t-s)}} \min_{i=1,2}\left(  \frac{ 1} {\sqrt{a_i}}\right) \exp \left( - \frac{(h(x) - h(z))^2}
{2 (t-s)}\right)  \]
with $\gamma(x)= {\mathbbm{1}}_{\lambda sign(x) \geq 0 }+\left(1+ \lambda sign(x) \right){\mathbbm{1}}_{\lambda sign(x) \leq 0 }.$ Since it is easy to prove that  $|\lambda|\leq 1,$ then, we can immediately deduce that $\gamma(x)>0,\, \forall x\in \R.$ Then, the result is achieved. 
\end{proof}

\begin{lemma}\label{maj}
    For any $0\leq s<t\leq T,$ and $x\in\R,$ there exist two positive constants $C^\lambda_{a_1,a_2}$ and $C_{a_1,a_2},$ such that
    \begin{equation}
        G(t-s,x,y)\leq \frac{C^\lambda_{a_1,a_2}}{\sqrt{2\pi(t-s)}}\,\exp \left( - \frac{(x-z)^2}
{2C_{a_1,a_2} (t-s)}\right) 
    \end{equation}
    with $C^\lambda_{a_1,a_2}=(1+|\lambda|)\di\sum_{i=1}^2 \frac{1}{\sqrt{a_i}}$ and $C_{a_1,a_2}=\di \min_{i=1,2} \frac{1}{{a_i}}.$
\end{lemma}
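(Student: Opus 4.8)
The plan is to dominate each of the two Gaussian factors appearing in the explicit formula \eqref{e:4} for $G$ by a single Gaussian of the stated shape, and then to absorb the surviving prefactors into the constant $C^\lambda_{a_1,a_2}$. Since $s<t$ the indicator $\mathbbm{1}_{\{t>s\}}$ equals $1$ and may be dropped. For each fixed $z$ the prefactor $\frac{\mathbbm{1}_{\{z\le 0\}}}{\sqrt{a_1}}+\frac{\mathbbm{1}_{\{z>0\}}}{\sqrt{a_2}}$ equals one of $a_1^{-1/2},a_2^{-1/2}$, hence is $\le\sum_{i=1}^2 a_i^{-1/2}$. Writing $E_1=\exp\!\left(-\frac{(h(x)-h(z))^2}{2(t-s)}\right)$ and $E_2=\exp\!\left(-\frac{(|h(x)|+|h(z)|)^2}{2(t-s)}\right)$, the bracket in \eqref{e:4} obeys $|E_1+\lambda\,\mathrm{sign}(z)E_2|\le E_1+|\lambda|E_2\le(1+|\lambda|)\max(E_1,E_2)$, and $|\lambda|\le 1$ because $\lambda=\frac{\rho_2\sqrt{a_2}-\rho_1\sqrt{a_1}}{\rho_2\sqrt{a_2}+\rho_1\sqrt{a_1}}$ has the form $\frac{A-B}{A+B}$ with $A,B>0$.

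It then remains to bound both $E_1$ and $E_2$ by $\exp\!\left(-\frac{(x-z)^2}{2C_{a_1,a_2}(t-s)}\right)$, which reduces to the single geometric comparison
\[
|h(x)-h(z)|\ \ge\ C_{a_1,a_2}^{-1/2}\,|x-z|,\qquad x,z\in\R,
\]
together with the elementary triangle inequality $|h(x)|+|h(z)|\ge|h(x)-h(z)|$, which transfers the bound for $E_1$ to $E_2$. I would prove the comparison by a case split on the signs of $x$ and $z$: if $x,z$ lie in the same region then $h(x)-h(z)=(x-z)a_i^{-1/2}$, so $|h(x)-h(z)|=|x-z|\,a_i^{-1/2}\ge|x-z|\min_i a_i^{-1/2}$; if $x,z$ lie in opposite regions then $|x-z|=|x|+|z|$ while $|h(x)-h(z)|=|x|\,a_{i(x)}^{-1/2}+|z|\,a_{i(z)}^{-1/2}\ge(|x|+|z|)\min_i a_i^{-1/2}$, giving the same estimate. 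Squaring, this pins down the constant $C_{a_1,a_2}$ appearing in the exponent as the quantity $\min_{i=1,2}a_i^{-1}$ of the statement, and yields $E_1,E_2\le\exp\!\left(-\frac{(x-z)^2}{2C_{a_1,a_2}(t-s)}\right)$.

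Combining the three estimates gives
\[
G(t-s,x,z)\ \le\ \frac{(1+|\lambda|)\sum_{i=1}^2 a_i^{-1/2}}{\sqrt{2\pi(t-s)}}\,\exp\!\left(-\frac{(x-z)^2}{2C_{a_1,a_2}(t-s)}\right),
\]
that is, the claim with $C^\lambda_{a_1,a_2}=(1+|\lambda|)\sum_{i=1}^2 a_i^{-1/2}$. The only genuinely non-routine step is the geometric inequality $|h(x)-h(z)|\gtrsim|x-z|$, and within it the mixed-sign case; everything else is bookkeeping with positive constants, plus the two easy observations $\mathbbm{1}_{\{t>s\}}=1$ and $|\lambda|\le 1$. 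One should just be careful to keep $C_{a_1,a_2}$ oriented so that it really occupies the denominator of the exponent with the correct sense — the case analysis above is what fixes this orientation.
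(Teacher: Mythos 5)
Your overall route is sound and genuinely different from the paper's: the paper does not prove Lemma \ref{maj} at all, it simply invokes Lemma 2.1 of \cite{ZZ4} and Lemma 3 of \cite{ZZ3}, whereas you reconstruct the estimate directly from the explicit kernel \eqref{e:4}. The ingredients you use (dropping $\mathbbm{1}_{\{t>s\}}$, bounding the prefactor by $\sum_i a_i^{-1/2}$, $|\lambda|\le 1$, the triangle inequality $|h(x)|+|h(z)|\ge |h(x)-h(z)|$ which gives $E_2\le E_1$, and the two-case piecewise-linear comparison for $h$) are all correct and are essentially the argument hidden behind the citation.

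However, there is a concrete slip in the final identification of the exponent constant, precisely at the step you flagged as needing care. Your case analysis proves $|h(x)-h(z)|\ge |x-z|\min_{i}a_i^{-1/2}$, and since $C_{a_1,a_2}=\min_i a_i^{-1}$ one has $\min_i a_i^{-1/2}=C_{a_1,a_2}^{1/2}$, \emph{not} $C_{a_1,a_2}^{-1/2}=\sqrt{\max_i a_i}$ as you assert; the claimed comparison $|h(x)-h(z)|\ge \sqrt{\max_i a_i}\,|x-z|$ is false whenever $\max_i a_i>1$ (take $a_1=a_2=4$, where $|h(x)-h(z)|=|x-z|/2$). Squaring what you actually proved gives $(h(x)-h(z))^2\ge C_{a_1,a_2}(x-z)^2$, hence
\begin{equation*}
E_1\le \exp\Bigl(-\tfrac{C_{a_1,a_2}(x-z)^2}{2(t-s)}\Bigr)=\exp\Bigl(-\tfrac{(x-z)^2}{2\,(\max_{i}a_i)\,(t-s)}\Bigr),
\end{equation*}
i.e.\ the constant sitting in the denominator of the exponent is $\max_i a_i=1/C_{a_1,a_2}$, not $C_{a_1,a_2}$ itself. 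So your argument, carried out consistently, yields the correct Gaussian domination but with the reciprocal of the stated constant; it matches the lemma as printed only when $\max_i a_i\le 1$. (In fact the lemma's displayed constant appears to be misstated for exactly this reason, and Corollary \ref{cormaj} inherits the same issue; the bound you derive is the one that is true in general.) To repair the write-up, either conclude with the denominator constant $\max_i a_i$ and note the discrepancy with the statement, or keep $C_{a_1,a_2}=\min_i a_i^{-1}$ but place it multiplicatively in the numerator of the exponent; do not claim $|h(x)-h(z)|\ge C_{a_1,a_2}^{-1/2}|x-z|$, which your own case split disproves.
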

\begin{proof}
    The proof of this lemma can be deduced immediately from Lemma $2.1$ in \cite{ZZ4} and Lemma $3$ in \cite{ZZ3}.
\end{proof}
\begin{corollary}\label{cormaj}Fix $t \in(0,T],$ and $x\in\R,$ there exist a positive constant $c_1$ depends $\lambda,a_1,a_2,$ such that
         \begin{equation}
       \int_\R G(t-s,x,z)\,dz\leq c_1, 
    \end{equation}
    for any $s\in(0,t),$ with $c_1:=c_1(\lambda,a_1,a_2)= C^\lambda_{a_1,a_2} \left({C_{a_1,a_2}}\right)^{1/2}.$
  \end{corollary}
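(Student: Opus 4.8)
The plan is to bound the spatial integral $\int_\R G(t-s,x,z)\,dz$ directly using the Gaussian upper estimate supplied by Lemma \ref{maj}. Fix $t\in(0,T]$, $x\in\R$, and $s\in(0,t)$, and set $\tau=t-s>0$. Lemma \ref{maj} gives
\[
\int_\R G(\tau,x,z)\,dz \;\leq\; \frac{C^\lambda_{a_1,a_2}}{\sqrt{2\pi\tau}}\int_\R \exp\!\left(-\frac{(x-z)^2}{2C_{a_1,a_2}\,\tau}\right)dz.
\]
The remaining integral is a standard Gaussian integral in the variable $z$: substituting $w=(z-x)/\sqrt{C_{a_1,a_2}\,\tau}$ gives $\int_\R \exp(-w^2/2)\sqrt{C_{a_1,a_2}\,\tau}\,dw = \sqrt{2\pi}\,\sqrt{C_{a_1,a_2}\,\tau}$. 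Hence the $\tau$-dependence and the $\sqrt{2\pi}$ cancel, leaving $\int_\R G(\tau,x,z)\,dz \leq C^\lambda_{a_1,a_2}\,\sqrt{C_{a_1,a_2}}=:c_1$, which is exactly the claimed constant and is independent of $s$ (and of $t$ and $x$).

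The key steps, in order, are: (i) invoke Lemma \ref{maj} to replace $G$ by the explicit Gaussian bound; (ii) compute the resulting Gaussian integral by the change of variables above, noting that the integral of $G$ over all of $\R$ of a true heat kernel would be a constant, so the $\tau$-factors must cancel; (iii) read off $c_1 = C^\lambda_{a_1,a_2}(C_{a_1,a_2})^{1/2}$ and observe that the bound holds uniformly in $s\in(0,t)$. One should also note that the integrability of $G(\tau,x,\cdot)$ is guaranteed a priori by the same Gaussian domination, so the manipulations are justified.

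I do not expect any real obstacle here: the statement is an immediate consequence of Lemma \ref{maj} together with the elementary evaluation of a Gaussian integral, and the only mild point of care is bookkeeping the constants so that the final $c_1$ matches the stated $C^\lambda_{a_1,a_2}(C_{a_1,a_2})^{1/2}$ — in particular making sure the $\sqrt{2\pi(t-s)}$ in the denominator is exactly absorbed by the $\sqrt{2\pi}\sqrt{C_{a_1,a_2}(t-s)}$ coming from the Gaussian, so that no leftover power of $(t-s)$ survives.
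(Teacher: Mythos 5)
Your argument is correct and is essentially the paper's own proof: both invoke Lemma \ref{maj} and then evaluate the resulting Gaussian integral by a linear change of variables, with the factors of $\sqrt{2\pi(t-s)}$ cancelling to give exactly $c_1 = C^\lambda_{a_1,a_2}\left(C_{a_1,a_2}\right)^{1/2}$. The only difference is the cosmetic choice of substitution ($w=(z-x)/\sqrt{C_{a_1,a_2}(t-s)}$ versus the paper's $y=(x-z)/\sqrt{2C_{a_1,a_2}(t-s)}$), which does not change the argument.
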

\begin{proof}
    Let $t,s\in [0,T],x\in \R.$ From Lemma \ref{maj} and by the change variable $y=\frac{x-z}{\sqrt{2C_{a_1,a_2} (t-s)}},$ we obtain
\[\int_\R G(t-s,x,z)\,dz\leq \frac{C^\lambda_{a_1,a_2} \sqrt{C_{a_1,a_2}}}{\sqrt{\pi}}\int_\R e^{-y^2}\,dy =  C^\lambda_{a_1,a_2} \left({C_{a_1,a_2}}\right)^{1/2}.\]
  Then, the proof is completed. 
\end{proof}
\subsubsection{Linear case}
In this context, we focus on the specific case where $b\equiv 0$ and $\sigma \equiv \sigma_0 \in \R.$

\begin{theorem}\label{existence} 
The mild solution $\left\{Y(t,x), t\in[0,T], x\in\R \right\}$ to the SPDE (\ref{equation eq:1}) is well-defined. Moreover,  it satisfies
\begin{equation}
\sup_{t\in[0,T]}\sup_{x\in\R} \mathbb{E} \big(|Y(t,x)|^2 \big)< \infty .
\end{equation}
\end{theorem}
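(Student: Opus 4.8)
\section*{Proof proposal}

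The plan is to realise the mild solution as the unique fixed point of the integral map defined by (\ref{mild}) on a suitable Banach space of adapted random fields, and then to read off the moment bound from the self-map estimate together with a Gronwall argument. Concretely, I would work in the space $\mathcal{H}$ of jointly measurable, $\mathbb{F}$-adapted fields $Y=\{Y(t,x)\}$ with $\|Y\|^2:=\sup_{t\in[0,T]}\sup_{x\in\R}\mathbb{E}|Y(t,x)|^2<\infty$, and define $\Phi$ by letting $\Phi(Y)(t,x)$ be the right-hand side of (\ref{mild}) with $Y$ inserted into $b$ and $\sigma$. For $Y\in\mathcal{H}$ the stochastic term makes sense as a classical It\^o integral, since the integrand $s\mapsto\int_\R G(t-s,x,z)\sigma(s,z,Y(s,z))\,dz$ is adapted and square-integrable on $[0,t]\times\Omega$; here one uses the adapted version of the isometry (\ref{iso}).

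The three estimates I would establish are as follows. For the initial term, Hypothesis \ref{hypo2} and Corollary \ref{cormaj} give $|\int_\R G(t,x,z)\xi(z)\,dz|\le M\,c_1$, uniformly in $(t,x)$. For the drift and diffusion terms the crucial device is that the measure $G(t-s,x,z)\,dz$ on $\R$ has total mass at most $c_1$ for every $s<t$ (Corollary \ref{cormaj}), which tames the $1/\sqrt{t-s}$ singularity of the kernel. Applying Cauchy--Schwarz with respect to this measure (respectively with respect to $G(t-s,x,z)\,dz\,ds$ for the drift) together with the linear growth (\ref{lip2}) in the form $|b|^2\vee|\sigma|^2\le 2\,Lip^2(1+|y|^2)$, I obtain, writing $g(t):=\sup_{x\in\R}\mathbb{E}|Y(t,x)|^2$,
\begin{equation*}
\sup_{x\in\R}\mathbb{E}\big|\Phi(Y)(t,x)\big|^2\le C_1+C_2\int_0^t\big(1+g(s)\big)\,ds,
\end{equation*}
with $C_1,C_2$ depending only on $M,Lip,T,c_1$. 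In particular $\Phi$ maps $\mathcal{H}$ into itself.

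For existence and uniqueness I would run a Picard iteration $Y^{(n+1)}=\Phi(Y^{(n)})$ with $Y^{(0)}(t,x)=\int_\R G(t,x,z)\xi(z)\,dz$. Using the Lipschitz bound (\ref{lip1}) in place of the growth bound, the same Cauchy--Schwarz manipulation yields, for $h_n(t):=\sup_{x\in\R}\mathbb{E}|Y^{(n+1)}(t,x)-Y^{(n)}(t,x)|^2$, the recursion $h_n(t)\le K\int_0^t h_{n-1}(s)\,ds$; iterating gives $\sup_{t\le T}h_n(t)\le C\,(KT)^n/n!$, so $\{Y^{(n)}\}$ is Cauchy in $\mathcal{H}$ and its limit $Y$ is the unique fixed point, i.e.\ the mild solution of (\ref{equation eq:1}). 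Finally, applying the displayed inequality to $Y=\Phi(Y)$: since the fixed point lies in $\mathcal{H}$, $g$ is finite and bounded on $[0,T]$, whence $g(t)\le (C_1+C_2T)+C_2\int_0^t g(s)\,ds$, and Gronwall's lemma gives $\sup_{t\in[0,T]}\sup_{x\in\R}\mathbb{E}|Y(t,x)|^2\le (C_1+C_2T)e^{C_2T}<\infty$, which is exactly the claimed bound.

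I expect the only genuinely delicate point to be the interplay between the temporal singularity of $G$ and the It\^o integral: a pointwise bound on $G$ is not integrable in $s$ near $s=t$, so the whole scheme hinges on never estimating $G$ pointwise but only through the mass bound $\int_\R G\,dz\le c_1$ of Corollary \ref{cormaj}. The remaining verifications are routine but must be carried out: justifying the adapted It\^o isometry for the solution-dependent integrand $G(t-s,x,z)\sigma(s,z,Y(s,z))$, rather than the deterministic $\varphi$ in the Proposition giving (\ref{iso}), and checking the measurability and adaptedness of the fixed point $Y$.
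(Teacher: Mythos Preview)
Your proposal is correct in spirit, but it solves a harder problem than the one posed. Theorem \ref{existence} sits in the \emph{linear case} subsection, where the paper has just specialised to $b\equiv 0$ and $\sigma\equiv\sigma_0\in\R$. In that setting the mild solution (\ref{mild}) is an explicit Gaussian field,
\[
Y(t,x)=\int_\R G(t,x,z)\,\xi(z)\,dz+\sigma_0\int_0^t\!\int_\R G(t-s,x,z)\,dz\,dB_s,
\]
and the paper's proof is a three-line direct computation: apply $(a+b)^2\le 2(a^2+b^2)$, use the isometry (\ref{iso}) on the stochastic term, bound $|\xi|\le M$, and invoke Corollary \ref{cormaj} to get $\mathbb{E}|Y(t,x)|^2\le 2c_1^2(M^2+T\sigma_0^2)$. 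No fixed point, no Picard scheme, no Gronwall.

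What you have written is essentially the paper's proof of the \emph{next} theorem, Theorem \ref{exist2}, which treats the genuinely nonlinear case under Hypothesis \ref{hypo1}. Your fixed-point framework, the Picard recursion $h_n(t)\le K\int_0^t h_{n-1}(s)\,ds$, and the closing Gronwall step all mirror that later argument. Your proof is valid for Theorem \ref{existence} as a (degenerate) special case---with $b\equiv 0$ and constant $\sigma$ the Lipschitz constant in $y$ is zero and the contraction is trivial---but you are deploying machinery the linear statement does not need. The paper separates the two precisely so that the elementary linear bound can be stated and proved cleanly before the nonlinear existence machinery is introduced.
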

\begin{proof}
 Consider $0 \le t \le T$ and $x\in\R$. By using the way that for any $a,b\in\R, (a+b)^2 \leq 2(a^2+b^2),$ and   isometry of Wiener given in (\ref{iso}), we obtain  
 \[\begin{array}{rcl}
 \mathbb{E}\big[|Y(t,x)|^2\big] &\leq&  2 \Bigg\{  \di  \left|\int_\R G(t,x,z)\,\xi(z)\,dz\right|^2 +   \di \sigma_0^2\,\mathbb{E}\left[ \left|\int_0^t \int_{{\mathbb R}} G(t-s,x,z)\,dz\,dB_s \right|^2\right]\Bigg\}
 \\&\leq&  2 \Bigg\{  \di  \left(\int_\R G(t,x,z)\,|\xi(z)|\,dz\right)^2 +\sigma_0^2\,  \di  \int_{0}^t \left( \int_{{\mathbb R}} G(t-s,x,z)\,dz\right)^2 \,ds \Bigg\}
 \\&\leq&  2 \Bigg\{ M^2 \di  \left(\int_\R G(t,x,z)\,dz\right)^2 +\sigma_0^2\,  \di  \int_{0}^t \left( \int_{{\mathbb R}} G(t-s,x,z)\,dz\right)^2 \,ds \Bigg\}
 \\&\leq& 2 \,c_1^2 \left[ M^2+ T\sigma_0^2\right]< \infty. 
 \end{array}\]
 The two last lines are obtained from the assumptions made on $\xi$ and Corollary \ref{cormaj}.
\end{proof}
\subsubsection{Non-linear case}
\begin{theorem}\label{exist2}
   We assume that Hypothesis \ref{hypo1} and \ref{hypo2}  are satisfied. Then, there exists a unique mild solution $Y=\left\{ Y(t,x), (t,x)\in [0,T]\times \R \right\}$ of Equation \ref{equation eq:1}.
\end{theorem}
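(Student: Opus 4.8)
The plan is to obtain $Y$ by a Picard iteration in the Banach space
\[
\mathcal{H}_T = \Big\{\, Y:[0,T]\times\R\times\Omega\to\R \ \text{jointly measurable, } \mathbb{F}\text{-adapted} : \ \|Y\|_{\mathcal{H}_T}^2 := \sup_{t\in[0,T]}\sup_{x\in\R}\mathbb{E}\big[|Y(t,x)|^2\big] < \infty \,\Big\},
\]
and to realise the mild solution as the unique fixed point of the map $\Phi$ sending $Y$ to the right-hand side of $(\ref{mild})$. First I would check that $\Phi$ maps $\mathcal{H}_T$ into itself. The initial term $\int_\R G(t,x,z)\xi(z)\,dz$ is handled exactly as in the proof of Theorem $\ref{existence}$, using $|\xi|\le M$ (Hypothesis $\ref{hypo2}$) and Corollary $\ref{cormaj}$. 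For the drift term, since $G(t-s,x,z)\,dz\,ds$ is a finite measure of total mass at most $c_1 T$ on $[0,t]\times\R$, Cauchy--Schwarz with respect to this measure together with the linear growth $(\ref{lip2})$ gives
\[
\mathbb{E}\Big[\Big|\int_0^t\!\!\int_\R G(t-s,x,z)\,b(s,z,Y(s,z))\,dz\,ds\Big|^2\Big] \le c_1 T\!\int_0^t\!\!\int_\R G(t-s,x,z)\,Lip^2\big(1+\mathbb{E}|Y(s,z)|^2\big)\,dz\,ds,
\]
which is finite and bounded in $x$ again by Corollary $\ref{cormaj}$. For the stochastic term, one first notes that $G(t-s,x,\cdot)$ is square-integrable in $(s,z)$ on $[0,t]\times\R$ (its $L^2$-norm in $z$ is of order $(t-s)^{-1/2}$, which is integrable in $s$), so that $(\ref{iso})$ applies; the isometry $(\ref{iso})$, followed again by Cauchy--Schwarz against $G(t-s,x,z)\,dz$ and $(\ref{lip2})$, yields $\mathbb{E}[|\cdot|^2]\le c_1^2\, Lip^2\int_0^t\big(1+\sup_z\mathbb{E}|Y(s,z)|^2\big)\,ds$, finite and uniform in $x$. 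Adaptedness of $\Phi(Y)$ is clear since the stochastic integral is $\mathcal{F}_t$-measurable.

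Next I would set $Y_0(t,x)=\int_\R G(t,x,z)\xi(z)\,dz$ and $Y_{n+1}=\Phi(Y_n)$, and establish two estimates by the same two computations. Writing $\phi_n(t)=\sup_{x}\mathbb{E}[|Y_n(t,x)|^2]$, the linear growth bounds give an inequality of the form $\phi_{n+1}(t)\le K_1 + K_2\int_0^t\phi_n(s)\,ds$ with $K_1,K_2$ depending only on $M,Lip,c_1,T$; iterating and invoking Gronwall's inequality yields $\sup_n\sup_{t\le T}\phi_n(t)<\infty$. Then, replacing the linear growth by the Lipschitz bound $(\ref{lip1})$ in the drift and stochastic estimates, and writing $g_n(t)=\sup_{x}\mathbb{E}[|Y_{n+1}(t,x)-Y_n(t,x)|^2]$, one obtains $g_n(t)\le C\int_0^t g_{n-1}(s)\,ds$ with $C=C(Lip,c_1,T)$, hence $g_n(t)\le \frac{(CT)^n}{n!}\,\sup_{s\le T}g_0(s)$ by induction. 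Consequently $\sum_n\big(\sup_{t\le T}g_n(t)\big)^{1/2}<\infty$, so $(Y_n)$ is Cauchy in $\mathcal{H}_T$ and converges to some $Y\in\mathcal{H}_T$.

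It remains to pass to the limit in $Y_{n+1}=\Phi(Y_n)$: by $(\ref{lip1})$ and the previous bounds, $b(s,z,Y_n)\to b(s,z,Y)$ and $\sigma(s,z,Y_n)\to\sigma(s,z,Y)$ in the relevant $L^2$ sense, so dominated convergence handles the Lebesgue convolution and $(\ref{iso})$ handles the stochastic one; thus $Y=\Phi(Y)$, i.e.\ $Y$ is a mild solution. For uniqueness, if $Y$ and $\tilde Y$ are two mild solutions in $\mathcal{H}_T$, the Lipschitz estimates give $g(t):=\sup_x\mathbb{E}[|Y(t,x)-\tilde Y(t,x)|^2]\le C\int_0^t g(s)\,ds$ with $g$ bounded on $[0,T]$, and Gronwall forces $g\equiv 0$. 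The part requiring the most care is the stochastic convolution: at each iteration one must verify that $\int_\R G(t-\cdot,x,z)\,\sigma(\cdot,z,Y_n(\cdot,z))\,dz$ is square-integrable in time so that the stochastic integral is well defined, and that the second-moment bounds are uniform in $x$ — this is precisely where Lemma $\ref{maj}$ and Corollary $\ref{cormaj}$ are used to tame the non-smooth kernel $G$; the remainder is the standard contraction/Gronwall bookkeeping.
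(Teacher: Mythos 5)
Your proposal is correct and follows essentially the same route as the paper: a Picard iteration whose moment estimates rest on the kernel bound of Corollary \ref{cormaj}, the Wiener/It\^o isometry \eqref{iso}, the Lipschitz and linear-growth hypotheses, and a Gronwall argument for both the uniform bound on the iterates and uniqueness (the paper cites Dalang's extension of Gronwall's lemma where you iterate to get the factorial decay, and you add the routine passage to the limit $Y=\Phi(Y)$ that the paper leaves implicit). The only blemish is cosmetic: the $L^2_z$-norm of $G(t-s,x,\cdot)$ is of order $(t-s)^{-1/4}$ (its square is $(t-s)^{-1/2}$), but this does not affect the argument since the relevant integrability in $s$ still holds.
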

\begin{proof}Consider $t\in(0,T],x\in\R.$ The proof will be done in two steps. First, however, we need to justify the isometry. Based on the definition of the mild solution provided in (\ref{mild}), there are three separate integrals to consider.  Since the function $G$ is deterministic, we can apply the Stochastic Fubini theorem and Wiener's isometry to deduce: 
\begin{equation}\label{integI}
\begin{array}{rcl}
     &&\di \mathbb{E}\left[ \left(\int_0^t \int_{{ \R}} G(t-s,x,z)\,\sigma(s,z,Y(s,z))\,dz\,dB_s \right)^2\right] \\&=& \di \int_0^t \int_{\R^2} G(t-s,x,z_1)\,G(t-s,x,z_2)\,g^\sigma(s,z_1,z_2)\,dz_1\,dz_2\,ds 
    \\ \text{and} &&\di \mathbb{E}\left[ \left(\int_0^t \int_{{ \R}} G(t-s,x,z)\,b(s,z,Y(s,z))\,dz\,ds \right)^2\right] \\&=& \di \int_{(0,t)^2}  \int_{\R^2} G(t-s_1,x,z_1)\,G(t-s_2,x,z_2) g^b(s_1,s_2,z_1,z_2)\,dz_1\,dz_2\,ds_1\,ds_2, 
     \end{array}
\end{equation}
  with $g^b(s_1,s_2,z_1,z_2)=\mathbb{E}\left[b(s_1,z_1,Y(s_1,z_1))b(s_2,z_2,Y(s_2,z_2))\right]$\\ and $g^\sigma(s,z_1,z_2)=\mathbb{E}\left[\sigma(s,z_1,Y(s,z_1))\sigma(s,z_2,Y(s,z_2))\right].$  \\
  
 { \it Step I: Uniqueness.} Suppose that $Y_1(t,x)$ and $Y_2(t,x)$  both satisfy the definition of mild solution provided in (\ref{mild}).  We wish to prove that $Y_1$ and $Y_2$ are
modifications of one another. To illustrate this point, we set: $D(t,x)=Y_1(t,x)-Y_2(t,x).$ So according to the definition given in (\ref{mild}), the term $D$ can be expressed as well:  
\[ \begin{array}{rcl}
D(t,x)&=& \di  \int_0^t \int_{\R} G(t-s,x,z) \left[b(s,z,Y_1(s,z))-b(s,z,Y_2(s,z)) \right]dz\,ds\\
&+& \di  \int_0^t \int_{\R} G(t-s,x,z) \left[\sigma(s,z, Y_1(s,z))-\sigma(s,z,Y_2(s,z)) \right]dz\,dB_s
\\&=& D_1(t,x)+D_2(t,x)\di . 
\end{array} 
\]
By using the way that for any $a,b\in\R, (a+b)^2 \leq 2(a^2+b^2),$ we have clearly that $$\mathbb{E}\left[\left|D(t,x)\right|^2\right]\leq 2\left( \mathbb{E}\left[\left|D_1(t,x)\right|^2\right]+\mathbb{E}\left[\left|D_2(t,x)\right|^2\right]\right).$$ So, according to (\ref{integI}), we obtain
\begin{equation} \label{d1d2} \begin{array}{rcl}
&&\mathbb{E}\left[\left|D_1(t,x)\right|^2\right]
\\  &\leq & \di \int_{(0,t)^2}  \int_{\R^2} G(t-s_1,x,z_1)\,G(t-s_2,x,z_2)\,\mathbb{E}\big(\Delta b (s_1,z_1)\Delta b (s_2,z_2)\big)dz_1\,dz_2\,ds_1\,ds_2
\\ \text{and}
&&\mathbb{E}\left[\left|D_2(t,x)\right|^2\right]
\\  &\leq &  \di \int_0^t \int_{\R}\int_{{ \R}} G(t-s,x,z_1)\,G(t-s,x,z_2) \di \mathbb{E}\big(\Delta \sigma (s,z_1)\Delta \sigma (s,z_2)\big)dz_1\,dz_2\,ds,
\end{array} 
\end{equation}
with $\Delta b (s,z)=b(s,z,Y_1(s,z))-b(s,z,Y_2(s,z))$ and $\Delta \sigma (s,z)=\sigma(s,z,Y_1(s,z))-\sigma(s,z,Y_2(s,z)),$ for any $s\in (0,t)$ and $z\in\R.$ Thanks to Cauchy-Schwarz inequality, one can get
\[ \begin{array}{rcl} \mathbb{E}\bigg(\Delta b(s_1,z_1)\Delta b(s_2,z_2)\bigg)  &\leq & \di \left\{\mathbb{E}\left[\left(\Delta b(s_1,z_1)\right)^2\right] \,\mathbb{E}\left[\left(\Delta b(s_2,z_2)\right)^2\right]\right\}^{1/2}  
\\&\leq & \di  \sup_{z\in\R}\left\{\mathbb{E}\left[\left(\Delta b(s_1,z)\right)^2\right] \right\}^{1/2} \sup_{z\in\R}\left\{\mathbb{E}\left[\left(\Delta b(s_2,z)\right)^2\right] \right\}^{1/2},
\\ \text{and} \quad   \mathbb{E}\bigg(\Delta \sigma(s,z_1)\Delta \sigma(s,z_2)\bigg) &\leq & \di \left\{\mathbb{E}\left[\left(\Delta \sigma(s,z_1)\right)^2\right] \,\mathbb{E}\left[\left(\Delta \sigma(s,z_2)\right)^2\right]\right\}^{1/2}\\  &\leq & \di  \sup_{z\in \R}\mathbb{E}\left[\left(\Delta \sigma(s,z)\right)^2\right].
\end{array}\]
 Therefore, this, together with Corollary \ref{cormaj} and H$\ddot o$lder inequality, allows us to obtain:
\begin{equation} \label{d1} 
\begin{array}{rcl}
\mathbb{E}\left[\left|D_1(t,x)\right|^2\right]
  &\leq & \di \int_{(0,t)^2} \di  \sup_{z\in\R}\left\{\mathbb{E}\left[\left(\Delta b(s_1,z)\right)^2\right] \right\}^{1/2} \sup_{z\in\R}\left\{\mathbb{E}\left[\left(\Delta b(s_2,z)\right)^2\right] \right\}^{1/2} \\&& \times \di  \left(\int_{\R} G(t-s_1,x,z)\,dz\right) \left(\int_{\R} G(t-s_2,x,z)\,dz\right) \,ds_1\,ds_2
 \\ &\leq & \di c_1^2\int_{(0,t)^2} \di  \sup_{z\in\R}\left\{\mathbb{E}\left[\left(\Delta b(s_1,z)\right)^2\right] \right\}^{1/2} \sup_{z\in\R}\left\{\mathbb{E}\left[\left(\Delta b(s_2,z)\right)^2\right] \right\}^{1/2}ds_1\,ds_2
  \\ &= & \di c_1^2\left(\int_{(0,t)} \di   \sup_{z\in\R}\left\{\mathbb{E}\left[\left(\Delta b(s,z)\right)^2\right] \right\}^{1/2}ds\right)^2
  \\ &\leq  & \di Tc_1^2 \int_{0}^t \di   \sup_{z\in\R}\mathbb{E}\left[\left(\Delta b(s,z)\right)^2\right]ds.
  \end{array}
  \end{equation}
  And \begin{equation} \label{d2} 
  \begin{array}{rcl}
\mathbb{E}\left[\left|D_2(t,x)\right|^2\right]
  &\leq & \di \int_{0}^t \di  \sup_{z\in\R}\mathbb{E}\left[\left(\Delta \sigma(s,z)\right)^2\right]   \left(\int_{\R} G(t-s,x,z)\,dz\right)^2 ds
 \\ &\leq & \di c_1^2\di \int_{0}^t \di  \sup_{z\in\R}\mathbb{E}\left[\left(\Delta \sigma(s,z)\right)^2\right] ds.
\end{array} 
\end{equation}
All this, with  the Lipschitz property given in (\ref{lip1}), allows us to get 
\begin{equation} \label{d} 
\begin{array}{rcl}
\mathbb{E}\left[\left|D(t,x)\right|^2\right]
 &\leq  & \di c_2 \int_{0}^t \di   \sup_{z\in\R}\mathbb{E}\left[\left|D (s,z)\right|^2\right]ds,
  \end{array}
  \end{equation}
with $c_2= 2^2\max(T,1)c_1^2 Lip^2.$ 

So, by denoting $H(t)=\di  \sup_{x\in  \R} \mathbb{E}\left[\left|D(t,x)\right|^2\right],$  one can get
$H(t) \leq c_2\di \int_{0}^t \di  H(s) ds. $ Then, according to the extension of Gronwall's  Lemma $15$ in \cite{dalang1999}, we can conclude that $H(t) \equiv 0$. This
completes the proof of uniqueness.
\\

 { \it Step II: Existence.} This parts follows a classical Picard iteration schema. Consider $(t,x)\in [0,T]\times \R.$ Let  $Y_n=\left\{ Y_n(t,x), \forall n\geq 0\right\}$ is the sequence defined as follows:
 \[ \begin{array}{rcl}
     Y_0(t,x)&=&  \di  \int_\R G(t,x,z)\,\xi(z)\,dz,
     \\\text{and} \quad Y_{n+1}(t,x)&=&   \di  \int_\R G(t,x,z)\,\xi(z)\,dz +\int_0^t \int_{\R} G(t-s,x,z)\,b(s,z,  Y_n(s,z))\,dz\,ds \\&+&  \di \int_0^t \int_{\R} G(t-s,x,z)\,\sigma(s,z,Y_n(s,z))\,dz\,dB_s.
 \end{array}\]
 Before establishing the convergence of this schema, we first check that $ \di \sup_{n\geq 0}\sup_{t\in[0,T]} \sup_{x\in\R} \mathbb{E}\left[\left|Y_n(t,x)\right|^2\right]$ is finite. 
Then, proceeding as the same techniques used in {\it step I}, one can easily prove that 
$$\begin{array}{rcl}
\mathbb{E}\left[\left|Y_{n+1}(t,x)\right|^2\right]
 &\leq& c\Big\{\di \left( \int_\R G(t,x,z)\,\xi(z)\,dz\right)^2 
\\ &+& \di  \int_{(0,t)^2}  \int_{\R^2} G(t-s_1,x,z_1)\,G(t-s_2,x,z_2)\,g^b_n(s_1,s_2,z_1,z_2)\,dz_1\,dz_2\,ds_1\,ds_2
\\&+& \di \int_0^t \int_{\R^2} G(t-s,x,z_1)\,G(t-s,x,z_2)\,g^\sigma_n(s,z_1,z_2)\,dz_1\,dz_2\,ds \Big\},
\end{array}$$
 with $g^b_n(s_1,s_2,z_1,z_2)=\mathbb{E}\Big(b(s_1,z_1,Y_n(s_1,z_1))b(s_2,z_2,Y_n(s_2,z_2))\Big)$\\ and $g^\sigma_n(s,z_1,z_2)=\mathbb{E}\Big(\sigma(s,z_1,Y_n(s,z_1))\sigma(s,z_2,Y_n(s,z_2))\Big).$  
 
By using the way that $\xi$ is bounded function, and from Corollary \ref{cormaj}, we can easily check that $\di \int_\R G(t,x,z)\,\xi(z)\,dz<Mc_1.$ As before, one can easily get 
\begin{equation}\label{majYn}
\begin{array}{rcl}
\mathbb{E}\left[\left|Y_{n+1}(t,x)\right|^2\right]
 &\leq& c_1^2\left\{ \di  M^2 +T \int_{0}^t \di   \sup_{z\in\R}\mathbb{E}\left[\left| b(Y_n(s,z))\right|^2\right]ds
+ \int_{0}^t \di   \sup_{z\in\R}\mathbb{E}\left[\left| \sigma(Y_n(s,z))\right|^2\right]ds\right\}
\\  &\leq& c_3\left\{ \di  1 + \int_{0}^t \di   \left[1+ \sup_{z\in\R}\mathbb{E}\left[\left| Y_n(s,z)\right|^2\right] \right]ds
\right\},
\end{array}
\end{equation}
with $c_3=\max\left(M^2,TLip^2,Lip^2\right)$ and the last line was obtained by   the second Lipschitz property given in (\ref{lip2}). 
Therefore, by denoting $  M_n(t)= \di\sup_{z\in\R}\mathbb{E}\left[\left|Y_n(t,z)\right|^2\right],$ we obtain 
 \begin{equation}
     M_{n+1}(t)\leq c_3 \Big\{ \di 1 
+ \di  \di  \int_0^t\left[1+ M_n(t) \right]ds  \Big\}.
 \end{equation}
So according to Gronwall's Lemma $15$ in \cite{dalang1999}, one can deduce that $\di \sup_{n\geq 0}\sup_{t\in[0,T]} M_n(t)< \infty.$
Now, in order to prove that the sequence $\left\{Y_n(t,x),\forall n\geq 0\right\}$ converges in $L^p,$
we denote $$D_n(t,x)=Y_{n+1}(t,x)-Y_n(t,x) \quad and \quad H_n(t)=\di  \sup_{x\in  \R} \mathbb{E}\left[\left|D_n(t,x)\right|^2\right].$$
Thus, using the same techniques used in { \it Step I}, one can write  
$$ H_n(t) \leq c_2 \int_{0}^t \di  H_{n-1}(s) ds. $$
Using again the Gronwall's Lemma (see Lemma 15 in \cite{dalang1999}), we deduce that $\sum_{n>0}D_n(t)$ converges uniformly on $[0,T],$ which allows us to deduce the existence. 
\end{proof}

\section{Optimization problem}\label{sec3}
In this section, we present the optimization framework. Specifically, we establish both necessary and sufficient conditions for optimality.

We consider the following controlled SPDE $Y_u=Y$:
\begin{equation}\label{equationccontrol e:1}
\left\{
\begin{array}{rcl}
 dY(t,x)&=& \mathcal{A}_uY(t,x) \, dt + b(t,x,Y(t,x), u(t)) \, dt\\ &+& \di \sigma(t,x,Y(t,x), u(t)) \, dB(t), \quad (t,x) \in [0,T] \times \mathbb{R}, \\
Y(0,x) &=& \xi(x), \quad \forall x \in \mathbb{R}.
\end{array}
\right.
\end{equation}
The operator \( \mathcal{A}_u\) is the self-adjoint operator and it is given by:
\begin{equation} \label{operatorU}
\mathcal{A}_u= \frac{1}{2 \rho(x)} \frac{d}{dx} \left( a(x, u) \rho(x) \frac{d}{dx} \right),
\end{equation}
where \( a(x,u) \) is a function that depends on the control \( u(t) \), specifically:
\[
a(x, u) = a_1(u)  \mathbbm{1}_{\{ x \leq 0 \}} + a_2(u)  \mathbbm{1}_{\{ x > 0 \}},
\]
where $a_i \in C^1(U), \forall i=1,2.$ 
Moreover, \( \rho(x) \) is a piecewise constant function:
\[
\rho(x) = \rho_1  \mathbbm{1}_{\{ x \leq 0 \}} + \rho_2  \mathbbm{1}_{\{ x > 0 \}}.
\]
The process \( u(t) = u(t,\omega) \) is our control process, assumed to take values in a given convex set \( U \subset \mathbb{R} \). We assume that \( u(t) \) is \( \mathbb{F} \)-predictable.  

A control process \( u(t) \) is called \emph{admissible} if the corresponding SPDE \eqref{equationccontrol e:1} has a unique solution.
Here, \( \mathcal{U} \) denotes the set of all admissible control processes.

The coefficients (drift and the volatility) \(b,\sigma:\Omega\times[0,T]\times \mathbb{R}\times \mathbb{R}\times U \to \mathbb{R}\) satisfy the following assumptions: There exists a constant $C>0,$ such that
\begin{hypothesis}\label{hypo1.2}

    \begin{equation}\label{lip1.2}
       |b(t,x,y,u)-b(t,x,y',u)| \vee |\sigma(t,x,y,u)-\sigma(t,x,y',u)| \leq C\,|y-y'|, \quad \forall y,y' \in \R,
    \end{equation}
    for all $(t,x) \in [0,T] \times \mathbb{R}$ and $u\in U$.\\

    And,
     \begin{equation}\label{lip2.1}
       |b(t,x,y,u)| \vee |\sigma(t,x,y,u)| \leq C\left( 1+|y|\right), \forall  y \in \R,
    \end{equation}
    for all $(t,x) \in [0,T] \times \mathbb{R}$ and $u\in U$.
\end{hypothesis}

 The mild solution corresponding to the SPDE defined in 
 (\ref{equationccontrol e:1}) is expressed as the following stochastic process:
\begin{equation*}
 \begin{array}{rcl}
  Y(t,x)&=& {   \di  \int_\R G(t,x,z)\,\xi(z)\,dz +\int_0^t \int_{{\mathbb R}} G(t-s,x,z)\,b(s,z,Y(s,z),u(t))\,dz\,ds} \\&+&  \di \int_0^t \int_{{\mathbb R}} G(t-s,x,z)\,\sigma(s,z,Y(s,z),u(t))\,dz\,dB_s.
\end{array}
\end{equation*} 
For each $u\in U,$  Theorem \ref{exist2} ensures the existence of a unique mild solution 
$Y$ associated with the SPDE in $(\ref{equationccontrol e:1}).$

The objective is to minimize the cost functional:
\begin{equation}\label{eq:cost}
J(u) = \mathbb{E} \left[ \int_0^T \int_{\mathbb{R}} f(t,x,Y(t,x), u(t)) \, \rho(x) dx \, dt + \int_{\mathbb{R}}g(Y(T,x)) \, \rho(x) dx \right],
\end{equation}
where \( f:\Omega\times[0,T]\times \mathbb{R}\times \mathbb{R}\times U \to \mathbb{R} \) is the running cost and \( g:\Omega\times \mathbb{R}\to \mathbb{R} \) is the terminal cost.\\
The Hamiltonian for this problem is defined as:
\begin{align*}
H(t,x,\mathcal{A}_uy,y,u,p,q)= p(t,x) [\mathcal{A}_uy+ b(t,x,y,u)] +  q(t,x)\sigma(t,x,y,u))+f(t,x,y,u),
\end{align*}
where the couple \( (p(t,x),q(t,x)) \) are the adjoint variables solution of the BSPDE

\[
dp(t,x) = - \Big[\mathcal{A}_u^*p(t,x) + \frac{\partial H}{\partial y}(t,x,Y(t,x), u(t),p(t,x),q(t,x)) \Big] dt + q(t,x) dB(t),
\]
with terminal conditions:
\[
p(T,x) = \frac{\partial g(Y(T,x))}{\partial y}.
\]

The functions $f,b,\sigma, g$ are continuously differentiable over $\mathbb{R}\times U$ with bounded derivatives.

Let \( \varphi, \psi  \in  L^2(\mathbb{R}) \), and let \( \alpha, \beta \in   \mathbb{R} \). Then:
\begin{equation*}
\mathcal{A}_u(\alpha \varphi + \beta \psi) = \frac{1}{2 \rho(x)} \frac{d}{dx} \left( a(x, u) \rho(x) \frac{d}{dx} (\alpha \varphi + \beta \psi) \right).
\end{equation*}

Using the linearity of differentiation and multiplication:
\begin{equation*}
\mathcal{A}_u(\alpha \varphi + \beta \psi) = \alpha \mathcal{A}_u(\varphi) + \beta \mathcal{A}_u(\psi).
\end{equation*}

The Frechet derivative of the reduced Hamiltonian \( h(\varphi) \in L^2(\mathbb{R}) \) is expressed as:
\begin{equation*}
\langle \nabla_\varphi h, \psi \rangle = \int_{\mathbb{R}} \mathcal{A}_u(\psi)(x)p(x) \, dx,
\end{equation*}
where \( p \in L^2(\mathbb{R}) \). This ensures that the inner product between \( \mathcal{A}_u(\psi) \) and \( p \) is well-defined.

Define the reduced Hamiltonian \( h(\varphi) \) as:
\begin{equation*}
h(\varphi) := H(t, x, \mathcal{A}_u(\varphi),\varphi, u, p, q).
\end{equation*}

The reduced Hamiltonian \( h(\varphi) \) changes linearly with perturbations \( \psi \) in \( \varphi \). The Frechet derivative is defined by:
\begin{equation*}
h(\varphi + \psi) = h(\varphi) + \langle \nabla_\varphi h, \psi \rangle + o(\|\psi\|),
\end{equation*}
where:
\begin{equation}\label{fd}
\langle \nabla_\varphi h, \psi \rangle = \mathcal{A}_u(\psi)(x)p + \psi(x)\frac{\partial H}{\partial \varphi}.
\end{equation}

In particular, if \( H \) depends on \( \varphi \) only through \( \mathcal{A}_u(\varphi) \), the derivative simplifies to:
\begin{equation*}
\langle \nabla_\varphi h, \psi \rangle = \mathcal{A}_u(\psi)(x)p.
\end{equation*}

We will use the following convexity definition for the Hamiltonian \( H \):

The Hamiltonian \( H(t, x, \mathcal{A}_u(\varphi), \varphi, u, p, q) \) is convex with respect to \( \varphi \) and \( u \) if the following inequality holds for any \( \varphi_1, \varphi_2 \) and \( u_1, u_2 \):
\begin{align}\label{cH}
&H(t, x, \mathcal{A}_{u_2}(\varphi_2), \varphi_2, u_2, p, q) - H(t, x, \mathcal{A}_{u_1}(\varphi_1), \varphi_1, u_1, p, q)\geq \nonumber \\&
\langle \nabla_\varphi H(t, x, \mathcal{A}_{u_1}(\varphi_1), \varphi_1, u_1, p, q), \varphi_2 - \varphi_1 \rangle
+ \frac{\partial H}{\partial u}(t, x, \mathcal{A}_{u_1}(\varphi_1), \varphi_1, u_1, p, q) \big( u_2 - u_1 \big),
\end{align}
where we have used the definition of the Frechet derivative given in \eqref{fd}.

\subsection{Sufficient Optimality Condition}

\begin{theorem}\label{suffop}[Sufficient maximum principle]
Let $\widehat{u}(t) \in \mathcal{U}$ be a candidate optimal control with corresponding solution \(\widehat{Y}(t,x)\) to \eqref{equationccontrol e:1}, and let \((\widehat{p}(t,x), \widehat{q}(t,x))\) satisfy the corresponding adjoint equations.

Assume the following:
\begin{enumerate}
    \item The Hamiltonian
    $ H(t,x,\mathcal{A}_u\varphi,\varphi,u,p,q)$ is convex in \((\varphi,u)\) for each \((t,x)\).

    \item The terminal cost function \(g(y)\) is convex.

    \item The control $\widehat{u}(t)$ satisfies the \emph{minimum condition}:
    \[
    \widehat{u}(t) \in \arg\min_{v \in U} \int_{\mathbb{R}}  H(t,x,\mathcal{A}_u{Y}(t,x),{Y}(t,x),v,\widehat{p}(t,x),\widehat{q}(t,x))\rho(x)dx, \quad \text{for all } t.
    \]
\end{enumerate}
Then, \(\widehat{u}(t)\) is an optimal control.
\end{theorem}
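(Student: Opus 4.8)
The plan is to follow the classical Ekeland–Bensoussan–type argument for sufficient maximum principles, adapted to the mild/variational setting of this SPDE. First I would fix an arbitrary admissible control $u(t)\in\mathcal U$ with corresponding state $Y(t,x)$, and write the difference of costs
\[
J(u)-J(\widehat u)=\mathbb{E}\!\left[\int_0^T\!\!\int_{\mathbb R}\big(f(t,x,Y,u)-f(t,x,\widehat Y,\widehat u)\big)\rho(x)\,dx\,dt\right]
+\mathbb{E}\!\left[\int_{\mathbb R}\big(g(Y(T,x))-g(\widehat Y(T,x))\big)\rho(x)\,dx\right].
\]
For the terminal term, convexity of $g$ gives $g(Y(T,x))-g(\widehat Y(T,x))\ge g'(\widehat Y(T,x))\big(Y(T,x)-\widehat Y(T,x)\big)=\widehat p(T,x)\big(Y(T,x)-\widehat Y(T,x)\big)$, using the terminal condition on $\widehat p$. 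For the running term, I would express $f$ via the Hamiltonian: $f(t,x,y,u)=H(t,x,\mathcal A_u y,y,u,\widehat p,\widehat q)-\widehat p[\mathcal A_u y+b(t,x,y,u)]-\widehat q\,\sigma(t,x,y,u)$, so that the $f$-difference splits into a Hamiltonian difference plus terms involving $b$, $\sigma$ and the operator.

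The second step is to compute $\mathbb{E}\big[\int_{\mathbb R}\widehat p(T,x)(Y(T,x)-\widehat Y(T,x))\rho(x)\,dx\big]$ by applying the It\^o product rule to $\widehat p(t,x)\big(Y(t,x)-\widehat Y(t,x)\big)$ and integrating in $x$ against $\rho(x)\,dx$. Here $d(Y-\widehat Y)$ contains the drift $\mathcal A_u Y-\mathcal A_{\widehat u}\widehat Y+b(t,x,Y,u)-b(t,x,\widehat Y,\widehat u)$ and the diffusion $\sigma(t,x,Y,u)-\sigma(t,x,\widehat Y,\widehat u)$, while $d\widehat p$ contains $-[\mathcal A_{\widehat u}^*\widehat p+\partial_y H]$ and $\widehat q\,dB$. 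Taking expectations kills the martingale parts, and the key cancellation is that the $\mathcal A_u^*$ term pairs with $Y-\widehat Y$ via the adjoint relation of the earlier Lemma, $\langle \mathcal A_{\widehat u}(Y-\widehat Y),\widehat p\rangle_\rho=\langle Y-\widehat Y,\mathcal A_{\widehat u}^*\widehat p\rangle_\rho$, turning differences of operator terms into $\langle \nabla_\varphi H,\,Y-\widehat Y\rangle$-type quantities. After collecting terms, $J(u)-J(\widehat u)$ reduces to an expectation of
\[
\int_0^T\!\!\int_{\mathbb R}\Big[H(t,x,\mathcal A_u Y,Y,u,\widehat p,\widehat q)-H(t,x,\mathcal A_{\widehat u}\widehat Y,\widehat Y,\widehat u,\widehat p,\widehat q)-\langle\nabla_\varphi H(\widehat\cdot),Y-\widehat Y\rangle\Big]\rho(x)\,dx\,dt,
\]
plus a leftover term in $(u-\widehat u)$.

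The third step invokes the convexity hypothesis \eqref{cH}: the bracketed quantity above is bounded below by $\frac{\partial H}{\partial u}(t,x,\mathcal A_{\widehat u}\widehat Y,\widehat Y,\widehat u,\widehat p,\widehat q)(u-\widehat u)$, so that $J(u)-J(\widehat u)\ge \mathbb{E}\big[\int_0^T\!\int_{\mathbb R}\frac{\partial H}{\partial u}(\widehat\cdot)(u(t)-\widehat u(t))\rho(x)\,dx\,dt\big]$. Finally, the minimum condition (3): since $\widehat u(t)$ minimizes $v\mapsto\int_{\mathbb R}H(t,x,\mathcal A_v Y,Y,v,\widehat p,\widehat q)\rho(x)\,dx$ over the convex set $U$, the first-order optimality condition gives $\int_{\mathbb R}\frac{\partial H}{\partial u}(t,x,\ldots,\widehat u,\ldots)(v-\widehat u(t))\rho(x)\,dx\ge 0$ for every $v\in U$; applying this with $v=u(t,\omega)$ pointwise shows the remaining expectation is nonnegative, hence $J(u)\ge J(\widehat u)$.

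The main obstacle I anticipate is making the It\^o product rule step rigorous in the mild-solution framework: $Y$ and $\widehat Y$ are only mild solutions, so $\mathcal A_u Y$ need not exist as a function, and one must either work with a weak/variational formulation (testing against smooth $\varphi$ and using the adjoint Lemma), or approximate via Yosida-type regularizations and pass to the limit using the $L^2$-bounds from Theorem \ref{exist2}. A secondary subtlety is that the operator itself depends on the control, so the ``curvature'' term $\mathcal A_u Y-\mathcal A_{\widehat u}\widehat Y$ must be split carefully as $\mathcal A_{\widehat u}(Y-\widehat Y)+(\mathcal A_u-\mathcal A_{\widehat u})Y$, with the second piece absorbed into the $\varphi$- and $u$-dependence of $H$ so that the convexity inequality \eqref{cH}, which is stated jointly in $(\varphi,u)$, can be applied; keeping the bookkeeping consistent with the definition of $\nabla_\varphi H$ in \eqref{fd} is where care is needed.
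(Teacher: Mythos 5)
Your proposal is correct and follows essentially the same route as the paper's proof: the same decomposition $J(u)-J(\widehat u)=I_1+I_2$, convexity of $g$ for the terminal term, rewriting $f$ through the Hamiltonian, It\^o's formula applied to $\widehat p\,(Y-\widehat Y)$ paired against $\rho(x)\,dx$, the adjoint identity $\langle \mathcal A_{\widehat u}\widetilde Y,\widehat p\rangle_\rho=\langle \widetilde Y,\mathcal A_{\widehat u}^*\widehat p\rangle_\rho$, the convexity inequality \eqref{cH}, and finally the first-order variational inequality from the minimum condition. Your closing remarks on the mild-solution subtleties and the splitting $\mathcal A_u Y-\mathcal A_{\widehat u}\widehat Y=\mathcal A_{\widehat u}\widetilde Y+(\mathcal A_u-\mathcal A_{\widehat u})Y$ are points the paper passes over implicitly, but they do not change the argument.
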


\begin{proof}
Consider the difference in the cost functional:
\[
J(u) - J(\widehat{u}) = I_1 + I_2,
\]
where 
\[
I_1 = \mathbb{E} \left[ \int_0^T \int_{\mathbb{R}} \big( f(t,x,Y(t,x),u(t)) - f(t,x,\widehat{Y}(t,x),\widehat{u}(t)) \big) \, \rho(x)dx \, dt \right],
\]
\[
I_2 = \mathbb{E} \left[ \int_{\mathbb{R}} \big( g(Y(T,x)) - g(\widehat{Y}(T,x)) \big) \rho(x)\,dx \right].
\]
Using the convexity of \(g\), we obtain:
\[
g(Y(T,x)) - g(\widehat{Y}(T,x)) \geq g'(\widehat{Y}(T,x)) \big( Y(T,x) - \widehat{Y}(T,x) \big).
\]
Thus,
\[
I_2 \geq \mathbb{E} \left[ \int_{\mathbb{R}} \widehat{p}(T,x) \big( Y(T,x) - \widehat{Y}(T,x) \big)  \rho(x)\,dx \right].
\]
We expand \(f\) using the definition of the Hamiltonian:
\begin{align*}
f(t,x,y,u)& = H(t,x,\mathcal{A}_uy,y,u,p,q) - p(t,x) \mathcal{A}_uy(t,x) - p(t,x) b(t,x,y,u)\\& - q(t,x) \sigma(t,x,y,u).
\end{align*}
Define the shorthand notations
\begin{align*}
\widetilde{H} (t,x)&= H(t,x,\mathcal{A}_uY(t,x),Y(t,x),u(t),\widehat{p}(t,x),\widehat{q}(t,x)) \\&- H(t,x,\mathcal{A}_{\widehat{u}}\widehat{Y}(t,x),\widehat{Y}(t,x),\widehat{u}(t),\widehat{p}(t,x),\widehat{q}(t,x)) \\
\widetilde{b} (t,x)&= b(t,x,\mathcal{A}_uY(t,x),Y(t,x),u(t))-b(t,x,\mathcal{A}_{\widehat{u}}\widehat{Y}(t,x),\widehat{Y}(t,x),\widehat{u}(t))
\\
\widetilde{\sigma} (t,x)&= \sigma(t,x,\mathcal{A}_uY(t,x),Y(t,x),u(t))-\sigma(t,x,\mathcal{A}_{\widehat{u}}\widehat{Y}(t,x),\widehat{Y}(t,x),\widehat{u}(t)).
\end{align*}

Then,
\begin{align}\label{I1}
I_1 = \mathbb{E}  \int_0^T \int_{\mathbb{R}} \left[\widetilde{H} - \widehat{p} (\mathcal{A}_uY - \mathcal{A}_{\widehat{u}}\widehat{Y}+\widetilde{b}  )- \widehat{q}\widetilde{\sigma}   \right](t,x)\rho(x)dx  dt.
\end{align}
Let \(\widetilde{Y} = Y - \widehat{Y}\). Applying Ito's formula to \(\widehat{p}(t,x) \widetilde{Y}(t,x)\), we have:
\begin{align}\label{PY}
&\mathbb{E} \left[ \int_{\mathbb{R}} \widehat{p}(T,x) \widetilde{Y}(T,x) \rho(x)\,dx \right]\nonumber\\&=\mathbb{E} \left[ \int_0^T \int_{\mathbb{R}} \big[ \widehat{p} (\mathcal{A}_u{Y}-\mathcal{A}_{\widehat{u}}\widehat{Y}) - \widetilde{Y} (\mathcal{A}_{\widehat{u}}^*\widehat{p}+\frac{\partial \widehat{H}}{\partial y} )+ \widehat{p}\widetilde{b} + \widehat{q}\widetilde{\sigma} \big] (t, x) \rho(x)\,dx\, dt\right].
\end{align}
Using boundary conditions and integrating by parts with \(\mathcal{A}_u\), we simplify:
\small
\begin{align}\label{IP}
 \int_{\mathbb{R}} \widetilde{Y}(t,x) \mathcal{A}_{\widehat{u}}^*\widehat{p} (t,x)\rho(x)\,dx=\int_{\mathbb{R}} \widehat{p}(t,x) \mathcal{A}_{\widehat{u}}\widetilde{Y} (t,x) \rho(x)\,dx.
\end{align}
Summing \(I_1\) \eqref{I1} and \(I_2\) \eqref{PY} , together with \eqref{IP} , we obtain:
\[
J(u) - J(\widehat{u}) \geq \mathbb{E} \int_0^T \int_{\mathbb{R}} \left[ \widetilde{H}-\widehat{p} \mathcal{A}_{\widehat{u}}\widetilde{Y} -\frac{\partial \widehat{H}}{\partial y}\widetilde{Y}  \right] (t,x) \rho(x)dxdt.
\]
Using the notion of convexity of the Hamiltonian $H$ defined in \eqref{cH}, we get
\begin{align*}
\widetilde{H}(t,x)
&\geq \langle \nabla_y H(t, x, \mathcal{A}_{\widehat{u}}\widehat{Y}(t,x), \widehat{Y}(t,x), \widehat{u}(t,x), p(t,x), q(t,x)), Y(t,x) - \widehat{Y}(t,x) \rangle\\
&+ \frac{\partial H}{\partial u}(t, x, \mathcal{A}_{\widehat{u}}\widehat{Y}(t,x), \widehat{Y}(t,x), \widehat{u}(t,x), p(t,x), q(t,x)) \big( u(t) - \widehat{u}(t) \big).
\end{align*}
Together with the minimum condition on \(H\)
\[
 \int_{\mathbb{R}} \frac{\partial H}{\partial u}(t,x,\widehat{u}(t)) (u(t) - \widehat{u}(t)) \, \rho(x)dx  \geq 0.
\]
we obtain:
\[
J(u) - J(\widehat{u}) \geq \mathbb{E} \left[ \int_0^T \int_{\mathbb{R}} \frac{\partial H}{\partial u}(t,x,\widehat{u}(t)) (u(t) - \widehat{u}(t)) \, \rho(x)dx \, dt \right] \geq 0.
\]
Thus, \(\widehat{u}(t)\) is optimal.
\end{proof}

\subsection{Necessary Optimality Condition}
This section establishes a necessary maximum principle for the given stochastic control system. Unlike sufficiency conditions, this principle does not require convexity assumptions. Instead, we impose structural conditions on the admissible control processes.

\begin{itemize}
    \item For every \( t \in [0, T] \) and any bounded \(\mathcal{F}_t\)-measurable random variable \( \theta(t, \omega) \), the control process remains within the admissible set \( \mathcal{U} \).
    
    \item Given any pair of controls \( u, \beta \in {U} \) with a uniform bound \( \|\beta(t)\| \leq K \), define the adjustment factor:
    \[
    \delta(t) = \frac{1}{2K} \text{dist}(u(t), \partial {U}) \land 1 > 0.
    \]
    and put 
    \begin{equation}\label{betafuc}
       \beta(t)=\delta(t)\beta_0(t). 
    \end{equation}   
    The modified control process is then given by
    \begin{align} \label{pu}
    \tilde{u}(t) = u(t) + a \beta(t), \quad t \in [0, T],
    \end{align}
    which remains admissible for all \( a \in (-1,1) \).
\end{itemize}

Given the perturbed control \eqref{pu}, we seek to compute the derivative
\[
\frac{d}{da} \mathcal{A}_{u + a\beta} Y^{u + a\beta}(t,x) \Big|_{a=0}.
\]

The variation of the operator \(\mathcal{A}_u\) with respect to \(u\) is denoted by  \(\frac{\partial \mathcal{A}_u}{\partial u}\) 
 
which represents the Frechet derivative  of \(\mathcal{A}_u\) with respect to \(u\).

The Gateaux derivative of \(Y^{u + a\beta}\) at \(a=0\) is denoted by
\[
\frac{d}{da} Y^{u + a\beta} \Big|_{a=0} := Z.
\]
That is, \(Z\) represents the directional derivative of \(Y^u\) along the perturbation \(\beta\).

Applying the product rule to the operator \(\mathcal{A}_{u + a\beta}\) acting on \(Y^{u + a\beta}\), we obtain:
\[
\frac{d}{da} \mathcal{A}_{u + a\beta} Y^{u + a\beta} \Big|_{a=0} =
\frac{\partial \mathcal{A}_u}{\partial u} \beta(t) Y^u +
\mathcal{A}_u \left( \frac{d}{da} Y^{u + a\beta} \Big|_{a=0} \right).
\]

The process \(Z(t, x)\) satisfies:
\[
\begin{array}{rcl}
dZ(t, x) &= & \di \bigg[\frac{\partial \mathcal{A}_u}{\partial u} \beta(t)  Y(t, x) + \mathcal{A}_u Z(t, x)+ \frac{\partial b}{\partial y}(t, x) Z(t,x)
+ \frac{\partial b}{\partial u}(t, x) \beta(t) \bigg] dt
\\&+& \di  \bigg[\frac{\partial \sigma}{\partial y}(t, x) Z(t,x)+\frac{\partial \sigma}{\partial u}(t, x) \beta(t)\bigg] dB(t),
\end{array}
\]
with the initial condition:
\begin{equation}\label{cond0}
   Z(0, x) = \frac{d}{da} Y^{u+a\beta}(0, x)\big|_{a=0} = 0. 
\end{equation}

\begin{theorem}\label{necesthm}[Necessary maximum principle] Let $\hat{u} \in \mathcal{U}$. Then the following are equivalent:
\begin{enumerate}
    \item The first-order variation satisfies:
    \[\frac{d}{da} J(\hat{u} + a \beta) \Big|_{a=0} = 0, \quad \text{for all bounded } \beta \in \mathcal{U}.
    \]
   \item The Hamiltonian condition holds:
    \[
    \frac{\partial H}{\partial u} (t,x) \Big|_{u=\hat{u}} = 0, \quad \text{for all bounded } t \in [0, T].
    \]
\end{enumerate}
\end{theorem}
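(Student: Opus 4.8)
The plan is to reduce both implications to a single variational identity, namely
\[
\frac{d}{da}J(\widehat{u}+a\beta)\Big|_{a=0}
=\mathbb{E}\left[\int_0^T\left(\int_{\mathbb{R}}\frac{\partial H}{\partial u}(t,x)\Big|_{u=\widehat{u}}\,\rho(x)\,dx\right)\beta(t)\,dt\right],
\]
valid for every admissible perturbation $\beta$. Once this identity is in hand the equivalence is immediate: $(2)\Rightarrow(1)$ is obvious, and for $(1)\Rightarrow(2)$ one notes that $\beta(t)=\delta(t)\beta_0(t)$ with $\beta_0$ an arbitrary bounded $\mathbb{F}$-predictable process, and in particular $\beta_0$ may be taken of the form ${ \mathbbm{1}}_{(t_0,t_1]}(t)\,\theta(\omega)$ with $\theta$ bounded and $\mathcal{F}_{t_0}$-measurable; vanishing of the identity for all such $\beta$ forces $\int_{\mathbb{R}}\frac{\partial H}{\partial u}(t,x)|_{u=\widehat{u}}\rho(x)\,dx=0$ for a.e.\ $(t,\omega)$, which is the stated Hamiltonian condition.

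To obtain the identity I would first differentiate the cost. Since $f,g,b,\sigma$ are $C^1$ in $(y,u)$ with bounded derivatives and $a\mapsto Y^{\widehat{u}+a\beta}$ is Gâteaux differentiable with derivative $Z$ solving the linear variational SPDE displayed just before the theorem (with $Z(0,\cdot)=0$), the chain rule and dominated convergence give
\[
\frac{d}{da}J(\widehat{u}+a\beta)\Big|_{a=0}
=\mathbb{E}\left[\int_0^T\!\!\int_{\mathbb{R}}\Big(\frac{\partial f}{\partial y}Z+\frac{\partial f}{\partial u}\beta\Big)\rho\,dx\,dt
+\int_{\mathbb{R}} g'(\widehat{Y}(T,x))\,Z(T,x)\,\rho\,dx\right],
\]
and the terminal condition $\widehat{p}(T,x)=\partial g/\partial y(\widehat{Y}(T,x))$ rewrites the last term as $\mathbb{E}[\int_{\mathbb{R}}\widehat{p}(T,x)Z(T,x)\rho\,dx]$.

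Next I would apply Itô's formula to $t\mapsto\int_{\mathbb{R}}\widehat{p}(t,x)Z(t,x)\rho(x)\,dx$, using $dZ$ from the variational equation and $d\widehat{p}$ from the adjoint BSPDE. Taking expectations kills the stochastic integrals; the integration-by-parts duality $\int_{\mathbb{R}}\widehat{p}\,\mathcal{A}_{\widehat{u}}Z\,\rho\,dx=\int_{\mathbb{R}} Z\,\mathcal{A}_{\widehat{u}}^{*}\widehat{p}\,\rho\,dx$ (the identity already used in \eqref{IP}, legitimate here because $Z(t,0)=0$, so the interface term at $x=0$ drops) cancels the operator contributions; and the $\partial b/\partial y$ and $\partial\sigma/\partial y$ terms cancel against the corresponding pieces of $\partial\widehat{H}/\partial y=\partial f/\partial y+\widehat{p}\,\partial b/\partial y+\widehat{q}\,\partial\sigma/\partial y$. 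What survives is
\[
\mathbb{E}\left[\int_{\mathbb{R}}\widehat{p}(T,x)Z(T,x)\rho\,dx\right]
=\mathbb{E}\left[\int_0^T\!\!\int_{\mathbb{R}}\Big(\widehat{p}\,\tfrac{\partial\mathcal{A}_u}{\partial u}\beta\,\widehat{Y}+\widehat{p}\,\tfrac{\partial b}{\partial u}\beta+\widehat{q}\,\tfrac{\partial\sigma}{\partial u}\beta-\tfrac{\partial f}{\partial y}Z\Big)\rho\,dx\,dt\right].
\]
Substituting this into the expression for $dJ/da$, the $\partial f/\partial y\,Z$ terms cancel, and, since $H=p[\mathcal{A}_u y+b]+q\sigma+f$, the remaining integrand is precisely $\big(\partial H/\partial u\big)|_{u=\widehat{u}}\,\beta(t)$, yielding the claimed identity.

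The main obstacle is the rigorous justification of the two analytic ingredients used above. First, one must prove that $a\mapsto Y^{\widehat{u}+a\beta}$ is differentiable in a suitable $L^2$-sense with derivative the mild solution $Z$ of the variational equation; this requires a contraction/stability estimate for the mild formulation (in the spirit of the proof of Theorem~\ref{exist2}) together with the bounded-derivative hypotheses on $b,\sigma$ and on $\partial\mathcal{A}_u/\partial u$, plus an argument permitting differentiation under $\mathbb{E}\int\int$. Second, one must verify that Itô's formula and the duality for $\mathcal{A}_u$ are valid at the level of mild solutions, controlling the interface term at $x=0$ exactly as in the Lemma of Section~\ref{sec2}. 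Everything else is bookkeeping.
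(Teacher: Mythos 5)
Your proposal follows essentially the same route as the paper's proof: split $\frac{d}{da}J(\widehat{u}+a\beta)\big|_{a=0}$ into the running-cost and terminal-cost parts, rewrite the terminal part as $\mathbb{E}\big[\int_{\mathbb{R}}\widehat{p}(T,x)Z(T,x)\rho(x)\,dx\big]$, apply It\^o's formula together with the duality $\int_{\mathbb{R}}\widehat{p}\,\mathcal{A}_{\widehat{u}}Z\,\rho\,dx=\int_{\mathbb{R}}Z\,\mathcal{A}^{*}_{\widehat{u}}\widehat{p}\,\rho\,dx$ so that all terms collapse to $\mathbb{E}\big[\int_0^T\int_{\mathbb{R}}\frac{\partial H}{\partial u}\,\beta\,\rho\,dx\,dt\big]$, and then conclude by choosing localized perturbations $\beta$. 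The only differences are presentational: you state explicitly that, since $\beta$ is independent of $x$, the localization gives the $x$-integrated condition (which is in fact all the paper's final step delivers as well), and you flag the technical points (differentiability of $a\mapsto Y^{\widehat{u}+a\beta}$ and the validity of It\^o/duality for mild solutions) that the paper uses without comment.
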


\begin{proof}
For simplicity, we write $u$ instead of $\hat{u}$ in the following. The proof proceeds by decomposing the first-order variation into two terms:
\[
\frac{d}{da} J(u + a \beta) \Big|_{a=0} = I_1 + I_2,
\]
where
\[
I_1 = \frac{d}{da} \mathbb{E} \left[ \int_0^T \int_\R f(t, x, Y^{u+a\beta}(t), u(t) + a\beta(t)) \, \rho(x)dx dt \right] \Big|_{a=0}
\]
and
\[
I_2 = \frac{d}{da} \mathbb{E}\left[ \int_\R g(x, Y^{u+a\beta}(T,x)) \,\rho(x)dx\right] \Big|_{a=0}.
\]
By assumptions on $f$ and $g$, and using (\ref{cond0}), we obtain
\[
I_1 = \mathbb{E} \left[ \int_0^T \int_\R \left( \frac{\partial f}{\partial y} (t,x) Z(t,x) + \frac{\partial f}{\partial u} (t,x) \beta(t) \right) \rho(x)dxdt \right].
\]
\[
I_2 = \mathbb{E} \left[ \int_\R \frac{\partial g}{\partial y} (Y(T,x)) Z(T,x) \rho(x)dx \right] 
= \mathbb{E} \left[ \int_\R p(T,x) Z(T,x) \rho(x)dx\right].
\]
By the It\^o formula:

\begin{align*}
    I_2 &= \mathbb{E} \left[ \int_\R p(T,x) Z(T,x) \rho(x)dx \right] 
\\ & = \mathbb{E} \Big[ \int_\R \int_0^T p(t,x) dZ(t,x)dt \rho(x)dx
+ \int_\R \int_0^T Z(t,x) dp(t,x) dt \rho(x)\, dx \\ &\di + \int_\R \int_0^T d[Z, p](t,x) dt \rho(x)dx \Big]
\\ &= \mathbb{E} \Big[ \int_\R \int_0^T p(t,x) 
\left\{ \frac{\partial{ \cal A}_u}{\partial u}\beta(t)  Y(t,x) + {\cal A}_u Z(t,x) + \frac{\partial b}{\partial y} (t,x) Z(t,x) 
+ \frac{\partial b}{\partial u} (t,x) \beta(t) 
\right\} dt \rho(x)dx
\\&+  \int_\R \int_0^T p(t,x) 
\left\{ \frac{\partial \sigma}{\partial y} (t,x) Z(t,x) 
+ \frac{\partial \sigma}{\partial u} (t,x) \beta(t) 
\right\} dB(t) \rho(x)dx
\\&- \int_\R \int_0^T Z(t,x) \left[{\cal A}^*_u p(t,x) + \frac{\partial H}{\partial y} (t, x) \right] dt \rho(x)dx {  +\int_\R \int_0^T Z(t,x)q(t,x) dB(t)\rho(x)dx}
\\& {  + \int_\R \int_0^T q(t,x) 
\left\{ \frac{\partial \sigma}{\partial y} (t,x) Z(t,x) 
+ \frac{\partial \sigma}{\partial u} (t,x) \beta(t) 
\right\} dt\rho(x)dx} \Big]
\\ &
= \mathbb{E} \Big[ \int_\R \int_0^T p(t,x) \left\{ \frac{\partial{ \cal A}_u}{\partial u}\beta(t)  Y(t,x) + { \cal A}_u Z(t,x) \right\} dt \rho(x)dx
\\ & + \int_\R \int_0^T    Z(t,x) \left[  p(t,x)  \frac{\partial b}{\partial y} (t,x) + q(t,x)  \frac{\partial \sigma}{\partial y} (t,x)-  { \cal A}^*_up(t,x)-\frac{\partial H}{\partial y} (t, x) \right] dt \rho(x)dx
\\& + \int_\R \int_0^T  \beta(t)\left[ p(t,x) \frac{\partial b}{\partial u} (t,x)+q(t,x) \frac{\partial \sigma}{\partial u} (t,x)\right] \rho(x)dx dt
\Big].
\\&= -\mathbb{E} \left[ \int_\R \int_0^T Z(t,x) \frac{\partial f}{\partial y} (t, x) \rho(x)dx dt \right]
+ \mathbb{E} \left[ \int_\R \int_0^T  \left[\frac{\partial H}{\partial u} (t, x) - \frac{\partial f}{\partial u} (t,x) \right] \beta(t) \rho(x)dx dt \right]  
\\& =-I_1+\mathbb{E} \left[ \int_\R \int_0^T \frac{\partial H}{\partial u} (t, x)  \beta(t) \rho(x)dx dt \right].
\end{align*}
Therefore, we get 
\[
\frac{d}{da} J(u + a\beta) \Big|_{a=0} = I_1 + I_2 = \mathbb{E} \left[ \int_\R \int_0^T \frac{\partial H}{\partial u} (t, x) \beta(t) \rho(x)dx dt \right].
\]
We conclude that
\[
\frac{d}{da} J(u + a\beta) \Big|_{a=0} = 0
\]

if and only if

\[
\mathbb{E} \left[ \int_\R \int_0^T \frac{\partial H}{\partial u} (t, x) \beta(t) \rho(x)\,dx\, dt \right] = 0,
\]

for all bounded $\beta \in \mathcal{U}$ of the form (\ref{betafuc}).

Applying this to \( \beta(t) = \theta(t) \), 
we get that this is again equivalent to

\[
\frac{\partial H}{\partial u} (t, x) = 0, \quad \text{for all } (t, x) \in [0,T] \times \R.
\]

\end{proof}

\section{Applications}\label{sec4}
Controlled SPDEs have emerged as powerful tools for modeling and optimizing dynamic systems influenced by randomness and spatial heterogeneity. In composite materials, these equations enable precise control over key properties, such as thermal diffusivity, heat capacity, and stress distribution, under varying physical and environmental conditions. The following subsections present practical applications that demonstrate the utility of controlled SPDEs in optimizing the behavior of composite materials in different scenarios.

\subsection{Optimal Temperature Control in Composite Materials}

Optimal temperature control in composite materials is vital for maintaining desired properties and performance. During manufacturing, precise temperature regulation prevents defects like warping, cracking, or uneven hardening. Ensuring the ideal temperature range enhances strength, durability, and stability, making composites suitable for industries like aerospace, automotive, and construction. This section explores the theoretical aspects of temperature control and its impact on material quality.

So, we consider a controlled SPDE to model the behavior of a composite material. The state variable \( Y(t, x) \) represents a measurable property of the material, such as temperature,  at time \( t \) and spatial location \( x \). The system evolves under the influence of control \( u \), which adjusts the material's properties to achieve a desired behavior.

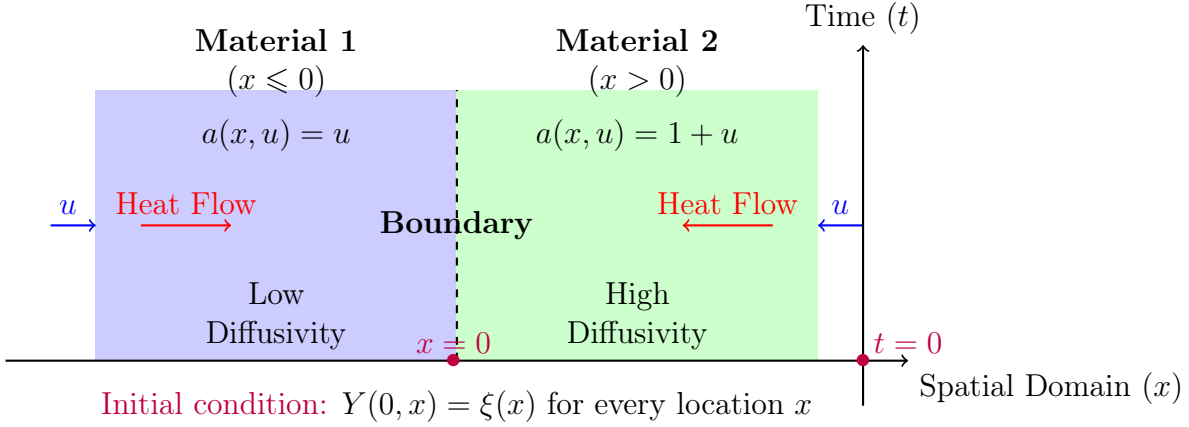
\begin{figure}[ht!]
\centering
\begin{tikzpicture}[scale=1.2, every node/.style={align=center}]

\fill[blue!20] (-4, 0) rectangle (0, 3);
\node[align=center] at (-2, 3.3) {\textbf{Material 1} \\ ($x \leq 0$)};
\node[align=center] at (-2, 2.5) {$a(x, u) = u$};

\fill[green!20] (0, 0) rectangle (4, 3);
\node[align=center] at (2, 3.3) {\textbf{Material 2} \\ ($x > 0$)};
\node[align=center] at (2, 2.5) {$a(x, u) = 1 + u$};

\draw[dashed, thick] (0, 0) -- (0, 3) node[midway] {\textbf{Boundary}};

\draw[thick, ->, red] (-3.5, 1.5) -- (-2.5, 1.5) node[midway, above] {Heat Flow};
\draw[thick, ->, red] (3.5, 1.5) -- (2.5, 1.5) node[midway, above] {Heat Flow};

\draw[thick, ->, blue] (-4.5, 1.5) -- (-4, 1.5) node[midway, above] {$u$ };
\draw[thick, ->, blue] (4.5, 1.5) -- (4, 1.5) node[midway, above] {$u$};

\node[align=center] at (-2, 0.5) {Low \\ Diffusivity};
\node[align=center] at (2, 0.5) {High \\ Diffusivity};

\node[align=center] at (0, -0.5) { {\color{purple} Initial condition:} $Y(0,x)=\xi(x)$ for every location $x$};

\draw[thick, ->] (-5, 0) -- (5, 0) node[below right] {Spatial Domain ($x$)};
\draw[thick, ->] (4.5, -0.5) -- (4.5, 3.5) node[above] {Time ($t$)};

\draw[ thick] (4.5, 0.2) node[below] {\color{purple}$\bullet$} ;
\draw[red] (4.5, 0.2) node[right] {\color{purple} $t=0$};
\draw[ thick] (-0.04, 0.2) node[below] {\color{purple}$\bullet$} ;
\draw[red] (0.5, 0.2) node[left] {\color{purple} $x=0$};

\end{tikzpicture}
\caption{Dynamics of a composite material with two regions: Material 1 ($x \leq 0$) and Material 2 ($x > 0$). The control $u$ dynamically adjusts thermal diffusivity to ensure uniform temperature distribution.}
\end{figure}

Consider a composite material with two regions:
\begin{itemize}
    \item {Material 1} ($x \leq 0$): A low-conductivity base material.
    \item {Material 2} ($x > 0$): A high-conductivity filler-enhanced region.
\end{itemize}

We assume that the initial state (at $t=0$) of the system is uniformly distributed and remains stable throughout the entire spatial domain $x$  within both materials.

The goal is to achieve a uniform temperature distribution $Y(t, x)$ across the material while minimizing the energy used to control the diffusivity $a(x, u)$.

 The evolution of temperature $Y(t, x)$ in the composite is modeled by:
\begin{equation}\label{eqex:1}
dY(t, x) = \mathcal{A}_u Y(t, x) \, dt + \sigma_0 \, dB(t),
\end{equation}
with initial condition   $Y(0,x) = \xi(x)$, for any $x\in \R,$ where the function $\xi:\R\mapsto\R$ satisfies  Hypothesis \ref{hypo2} and $\sigma_0\in\R$. And, $\mathcal{A}_u$ is the operator defined as 
\[
\mathcal{A}_u = \frac{1}{2} \frac{d}{dx} \left( a(x, u) \frac{d}{dx} \right),
\]
and the diffusivity $a(x, u)$ depends on the control $u$ as:
\[
a(x, u) =
\begin{cases}
u, & x \leq 0, \\
1 + u, & x > 0.
\end{cases}
\]

 The linear SPDE given in (\ref{eqex:1}) presents an example of the SPDE defined in (\ref{equationccontrol e:1}), that is where $b\equiv 0$ and $\sigma_0\in\R$.  Moreover, we are assumed in this case that the material densities are uniform across both components $(\rho_i=1,i=1,2).$

The control $u(t)$ increases thermal diffusivity in $x \leq 0$, compensating for the low base conductivity and adjusts the enhanced diffusivity in $x > 0$ to maintain balance.

This model describes a composite material where the spatial domain \( x \in \mathbb{R} \) is divided into two regions:

$\bullet$ Region 1 (\( x \leq 0 \)): The material has diffusivity \( a(x, u) = u \), directly controlled by \( u \).

$\bullet$ Region 2 (\( x > 0 \)): The material has diffusivity \( a(x, u) = 1 + u \), reflecting an inherent baseline diffusivity of \( 1 \), modified by \( u \).

The control \( u \) represents an external intervention, such as adjusting temperature, applying stress, or modifying conductivity, to influence the material's behavior. This setup is relevant for systems where the material's properties vary across regions, such as thermal insulation, stress distribution in structures, or spatially varying electrical conductivity.

The operator \( \mathcal{A}_u \) acts on \( Y(t, x) \) as follows:
\[
\mathcal{A}_uY(t, x) = \frac{1}{2} \frac{d}{dx} \left( a(x, u) Y' \right).
\]
For \( x \leq 0 \), where \( a(x, u) = u \), we have:
\[
\mathcal{A}_uY(t, x) = \frac{u}{2} Y''.
\]
For \( x > 0 \), where \( a(x, u) = 1 + u \), we have:
\[
\mathcal{A}_uY(t, x) = \frac{1 + u}{2}Y''.
\]

This structure is designed to capture systems where the control 
 \(u \) can change the dynamics of the state depending on the spatial location 
 \(x \) like spatially varying conductivity in a material.

Under Hypothesis 12, it follows from Theorem \ref{exist2}, that there exists a unique mild solution 
$Y.$ This uniqueness is essential for ensuring that the solution is mathematically well-posed and evolves in a consistent and stable manner.  
The existence of a mild solution plays a crucial role in describing the dynamic behavior of the system. It ensures that the state variable 
$Y(t,x)$ evolves in accordance with the combined deterministic and stochastic influences, capturing the underlying physical behavior of the composite material. This result provides a robust foundation for understanding how randomness and spatial interactions impact the material's evolution, making it a valuable tool for modeling and analysis.

The goal is to adjust \( u \) to minimize a cost functional that balances the deviation of \( Y(t, x) \) from a desired state and the effort required to control the material. The cost functional is:
\[
J(u) = \mathbb{E} \left[ \int_0^T \int_{\mathbb{R}} \left( Y(t, x)^2 + \theta u^2 \right) dx \, dt + \int_{\mathbb{R}} g(Y(T, x)) \, dx \right],
\]
where, \( Y(t, x)^2 \) penalizes the deviation of \( Y(t, x) \) from zero, \( \theta u^2 \) penalizes large control values, \( g(Y(T, x)) = \gamma Y(T, x)^2 \) is a terminal cost penalizing deviation at \( T \).

The Hamiltonian \( H \) is:
\[
H(t, x, Y, u, p, q) = p(t, x) \mathcal{A}_u Y(t, x) + \sigma_0 q(t, x) - \left( Y(t, x)^2 + \theta u^2 \right),
\]
where \( p(t, x) \) and \( q(t, x) \) are the adjoint variables. The derivative of the Hamiltonian with respect to \( u \) gives the optimality condition:
\[
\frac{\partial H}{\partial u} = p(t, x) \frac{\partial \mathcal{A}_u}{\partial u} Y(t, x) - 2 \theta u = 0.
\]
Solving for \( u \) yields:
\[
u^*(t) = \frac{p(t, x) \frac{\partial \mathcal{A}_u}{\partial u} Y(t, x)}{2 \theta} = \frac{p(t, x) \frac{1}{2} \frac{d^2 Y}{dx^2}}{2 \theta},
\]
where the derivative of \( \mathcal{A}_u \) with respect to \( u \) is computed as follows. Since \( a(x, u) = u  \mathbbm{1}_{\{ x \leq 0 \}} + (1 + u)  \mathbbm{1}_{\{ x > 0 \}} \), we have:
\[
\frac{\partial a(x, u)}{\partial u} =  \mathbbm{1}_{\{ x \leq 0 \}} +  \mathbbm{1}_{\{ x > 0 \}} = 1.
\]
Thus, the derivative of \( \mathcal{A}_u \) with respect to \( u \) is:
\[
\frac{\partial \mathcal{A}_u}{\partial u} = \frac{1}{2} \frac{d}{dx} \left( \frac{\partial a(x, u)}{\partial u} \frac{d}{dx} \right) = \frac{1}{2} \frac{d^2}{dx^2}.
\]

The optimal control adjusts the material's properties based on the curvature of \( Y(t, x) \) (captured by \( Y'' \)) and the adjoint variable \( p(t, x) \), which encodes sensitivity to the cost. High curvature or sensitivity necessitates a greater control effort to maintain the desired behavior.

This example illustrates the interplay between spatial dynamics, control, and material properties, providing insights into managing complex systems like composite materials.

\subsection{Heat Storage Control in Composite Materials}
Heat storage control in composite materials refers to managing the heat absorbed or released by the material during processes like manufacturing, curing, or thermal cycling. Proper heat management is crucial to ensure uniform temperature distribution, prevent thermal stresses, and optimize the material's performance. In composites, which often have varying thermal properties due to their multiple components (e.g., fibers and resins), controlling heat storage helps prevent issues like warping, cracking, or uneven curing. This control is particularly important in industries like aerospace or automotive, where material integrity under varying thermal conditions is essential.

Thermal energy storage systems frequently utilize composite materials with heterogeneous properties, confined within a bounded domain. These materials often consist of:
\begin{itemize}
    \item {Base material} (\(-1\leq x \leq 0\)): Low heat capacity, requiring external heating to improve energy absorption.
    \item {Enhanced material} (\( 1\geq x> 0\)): High heat capacity, where control is needed to prevent overheating or excessive heat loss.
\end{itemize}

The objective is to dynamically control the thermal diffusivity \(a(x, u)\) to achieve optimal heat storage while minimizing energy expenditure. Applications include renewable energy systems, building insulation, and phase-change materials.

The heat content \(Y(t, x)\) evolves according to the SPDE:
\begin{equation*}\label{equationccontrol2 e:1}
\left\{
\begin{array}{rcl}
 dY(t,x)&=& \mathcal{A}_uY(t,x) \, dt +  \sigma_0 \, dB(t), \quad (t,x) \in [0,T]\times [-1,1]\\
Y(0,x) &=& \xi(x), \quad \forall x \in  [-1,1],
\end{array}
\right.
\end{equation*}
where:
\[
\mathcal{A}_u = \frac{1}{2} \frac{d}{dx} \left( a(x, u) \frac{d}{dx} \right),
\]
and \(a(x, u)\) is the thermal diffusivity:
\[
a(x, u) =
\begin{cases}
u, & -1 \leq x \leq 0,  \\
1 + u, & 1 \geq x > 0.
\end{cases}
\]


 The mild solution guarantees the evolution of the heat content \( Y(t, x) \) over time, respecting the applied control \( u(t, x) \) and the inherent material properties \( a(x, u) \). .

We aim to minimize the cost functional:
\[
J(u) = \mathbb{E} \left[ \int_0^T \int_{-1}^1 \left( \gamma_1Y(t, x) + \gamma_2 u^2 \right) dx \, dt + \int_{-1}^1 \gamma_3 Y(T, x)^2 dx \right],
\]
where: 
\begin{itemize}
    \item \(\gamma_1Y(t, x)\): Penalizes deviations in heat content.
    \item \(\gamma_2 u^2\): Penalizes control effort.
    \item \(\gamma_3 Y(T, x)^2\): Ensures uniform heat distribution at \(t = T\).
\end{itemize}

The Hamiltonian for this system is:
\[
H(t, x, Y, u, p,q) = p(t, x) \mathcal{A}_u Y(t, x) +\sigma_0 q- \left( \gamma_1 Y(t, x) + \gamma_2 u^2 \right),
\]
where \((p,q)\) is the adjoint variable.

The optimal control \(u^*(t, x)\) satisfies:
\[
\frac{\partial H}{\partial u} = p(t, x) \frac{\partial \mathcal{A}_u}{\partial u} Y(t, x) - 2 \gamma_2 u = 0.
\]
Thus, the optimal control is:
\[
u^*(t, x) = \frac{p(t, x) \frac{1}{2} \frac{d^2 Y}{dx^2}}{2 \gamma_2}.
\]
The adjoint variable \(p(t, x)\) evolves backward in time according to:
\[
-\frac{\partial p(t, x)}{\partial t} = \mathcal{A}_u p(t, x) + \gamma_1+q(t,x)dB(t),
\]
with terminal condition:
\[
p(T, x) = 2 \gamma_3 Y(T, x).
\]

\textbf{Acknowledgment}\\
    The first author thanks the Laboratoire de Mat\'eriaux C\'eramiques et de Math\'ematiques in Valenciennes for their kind hospitality and support during her visit, during which this work was carried out.

\end{document}